\newcommand{\noopsort}[1]{}
\DeclareMathOperator{\NP}{NP}
\DeclareMathOperator{\GL}{GL}
\DeclareMathOperator{\val}{val}
\DeclareMathOperator{\pr}{pr}
\DeclareMathOperator{\tr}{Tr}
\DeclareMathOperator{\com}{Com}
\DeclareMathOperator{\Grass}{Grass}
\DeclareMathOperator{\Lat}{Lat}
\DeclareMathOperator{\round}{round}
\begin{document}

\newtheorem{theo}{Theorem}[section]
\newtheorem{lem}[theo]{Lemma}
\newtheorem{prop}[theo]{Proposition}
\newtheorem{cor}[theo]{Corollary}
\newtheorem{quest}[theo]{Question}
\newtheorem{rem}[theo]{Remark}
\newtheorem{ex}[theo]{Example}
\newtheorem{deftn}[theo]{Definition}
\newtheorem{rmk}[theo]{Remark}

\newcommand{\N}{\mathbb N}
\newcommand{\Z}{\mathbb Z}
\newcommand{\Zp}{\Z_p}
\newcommand{\Q}{\mathbb Q}
\newcommand{\Qp}{\Q_p}
\newcommand{\Fp}{\mathbb{F}_p}
\newcommand{\R}{\mathbb R}
\renewcommand{\O}{\mathcal O}
\newcommand{\OK}{\mathcal{O}_K}
\newcommand{\XX}{\mathbf X}
\newcommand{\trans}{{}^{\text t}}
\newcommand{\T}{\mathcal{T}}

\renewcommand{\prec}{\text{\rm prec}}

\newcommand{\lb}{\ensuremath{\llbracket}}
\newcommand{\rb}{\ensuremath{\rrbracket}}
\newcommand{\lp}{(\!(}
\newcommand{\rp}{)\!)}
\newcommand{\col}{\: : \:}

\def\todo#1{\ \!\!{\color{red} #1}}
\definecolor{purple}{rgb}{0.6,0,0.6}
\def\todofor#1#2{\ \!\!{\color{purple} {\bf #1}: #2}}

\title{Tracking $p$-adic precision}
\author{Xavier Caruso, David Roe \& Tristan Vaccon}
\date{February 2014}

\maketitle
\begin{abstract}
We present a new method to propagate $p$-adic precision in computations, which also
applies to other ultrametric fields.
We illustrate it with many examples and give a toy application to the
stable computation of the SOMOS 4 sequence.
\end{abstract}

\setcounter{tocdepth}{1}
\tableofcontents

\section{Introduction}

The last two decades have seen a rise in the popularity of $p$-adic methods in
computational algebra.  For example,
\begin{itemize}
\item Bostan et al. \cite{boston-gonzalez-perdry-schost:05a} used Newton sums for polynomials over $\Zp$ to compute composed products for polynomials over $\Fp$;
\item Gaudry et al. \cite{gaudry-houtmann-weng-ritzenthaler-kohel:06a} used $p$-adic lifting methods to generate genus 2 CM hyperelliptic curves;
\item Kedlaya \cite{kedlaya:01a}, Lauder \cite{lauder:04a} and many followers used $p$-adic cohomology to count points on hyperelliptic curves over finite fields;
\item Lercier and Sirvent \cite{lercier-sirvent:08a} computed isogenies between elliptic curves over finite fields using $p$-adic differential equations.
\end{itemize}
Like real numbers, most $p$-adic numbers cannot be represented exactly, but instead must be
stored with some finite precision.  In this paper we focus on methods for handling $p$-adic precision
that apply across many different algorithms.

Two sources of inspiration arise when studying $p$-adic algorithms.
The first relates $\Zp$ to its quotients $\Z/p^n\Z$.  The preimage in
$\Zp$ of an element $a \in \Z / p^n \Z$ is a ball, and these balls cover
$\Zp$ for any fixed $n$.  Since the projection $\Zp \to \Z/p^n\Z$ is a
homomorphism, given unknown elements in two such balls we can
locate the balls in which their sum and product lie.  Working on a computer
we must find a way to write elements using only a finite amount of data.
By lumping elements together into these balls of radius $p^{-n}$, we may
model arithmetic in $\Zp$ using the finite ring $\Z / p^n \Z$.  In this representation,
all $p$-adic elements in have constant \emph{absolute precision} $n$.

The second source draws upon parallels between $\Qp$ and $\R$.  Both
occur as completions of $\Q$ and we represent elements of both in terms
of a set of distinguished rational numbers.
In $\R$, floating point arithmetic provides approximate operations $\oplus$ and $\odot$ on a
subset $S_{\infty,h} \subset \Z[\frac12]$ that model $+$ and $\cdot$ in $\R$ up to a given relative precision $h$:
\[
\Big\vert \frac{x \circledast y}{x \ast y} - 1 \Big\vert \le 2^{-h}
\]
for $\ast \in \{+, \cdot\}$ and all $x, y \in S_{\infty,h}$ with $x \ast y \ne 0$.  The $p$-adic analogue
defines floating point operations on $S_{p,h} \subset \Z[\frac1p]$ with
\[
\Big\vert \frac{x \circledast y}{x \ast y} - 1 \Big\vert_p \le p^{-h}.
\]
When using floating point arithmetic, elements are represented with a constant \emph{relative precision} $h$.

In both of these models, precision (absolute or relative) is constant across all elements.
Since some operations lose precision, it can be useful to attach a precision to each element.
Over the reals, such interval arithmetic is unwieldy, since arithmetic operations always increase
the lengths of the inputs.  As a consequence, most computations in the real numbers rely on
statistical cancelation and external estimates of precision loss, rather than
attempting to track known precision at each step.  This tendency is strengthened
by the ubiquity of floating point arithmetic in scientific applications, where Gaussian
distributions are more common than intervals anyway.

In the $p$-adic world, precision tracking using intervals is much more feasible.
Even a long sequence of operations with such elements may not sacrifice any precision.  Intervals
allow number theorists to provably determine a result modulo a given power of $p$,
and the Gaussian distributions of measurement error over $\R$ have no direct analogue
over $\Qp$ anyway.  As a consequence, interval arithmetic is ubiquitous
in implementations of $p$-adic numbers.  The mathematical software packages
Sage \cite{sage}, PARI \cite{pari} and Magma \cite{magma} all include $p$-adic elements
that track precision in this way.

The approach of propagating precision with each arithmetic operation works well, but does
sometimes underestimate the known precision of a result, as we will discuss in Section
\ref{ssec:stepbystep}.  Moreover, elements of $\Qp$ provide building blocks for generic
implementations of polynomials, vector spaces, matrices and power series.  The practice
of storing the precision within each entry is not flexible enough for all applications.
Sometimes only a rough accounting of precision is needed, in which case storing
and computing the precision of each entry in a large matrix needlessly consumes
space and time.  Conversely, recording the precision of each entry does not allow
a constraint such as specifying the precision of $f(0)$, $f(1)$ and $f(2)$ for a
quadratic polynomial $f$.

For a vector space $V$ over $\Qp$, we propose that the fundamental object used to
store the precision of an element should be a $\Zp$-lattice $H \subset V$.  By using
general lattices one can eliminate needless loss of precision.  Moreover, specifying
the precision of each entry or recording a fixed precision for all entries can both
be interpreted in terms of lattices.  In Section \ref{sec:prec-proposal} we detail our
proposal for how to represent the precision of an element of a vector space.

In Section \ref{sec:mainlemma}, we develop the mathematical background on which our proposal is based.
The most notable result of this section is Lemma \ref{lem:main} which describes how lattices transform under non-linear maps and allows us to propagate
precision using differentials.  More specifically, it describes a class of first order lattices,
whose image under a map of Banach spaces is obtained by applying the
differential of that map.  In Section \ref{ssec:locanalytic} we make the conditions
of Lemma \ref{lem:main} more explicit in the case of locally analytic functions.

In Section \ref{sec:tracking} we propose methods for tracking precision in practice.
Section \ref{ssec:opt-tracking} includes a discussion of two models of precision tracking:
one-pass tracking, where the precision lattice is propagated at each step of the algorithm,
and two-pass tracking, where an initial pass computing rough approximations is used
in computing the precision lattices.  We introduce precision types in Section \ref{ssec:types},
which allow a tradeoff between flexibility, space and time in computing with precision.
In Section \ref{ssec:SOMOS-solution}, we give
an application of these ideas to an algorithm for computing terms of the SOMOS sequence.

In Appendix \ref{sec:manifold}, we extend the results of Section \ref{sec:mainlemma}
to $p$-adic manifolds, describing how to specify precisions for points on elliptic curves
and Grassmannians.  Finally, Appendix \ref{sec:differentials} describes how to compute
the derivative of many common operations on polynomials and matrices, with an eye
toward applying Lemma \ref{lem:main}.

\section{Precision proposals} \label{sec:prec-proposal}

\subsection{Problems in precision}
\label{ssec:stepbystep}

The usual way to track $p$-adic precision consists in replacing $p$-adic 
numbers by approximate elements of the form $a + O(p^N)$ and performing 
all usual arithmetical operations on these approximations.
We offer below three examples that illustrate cases where this way to 
track precision does not yield optimal results.

\subsubsection*{A linear map.}

Consider the function $f : \Qp^2 \to \Qp^2$ mapping $(x,y)$ to $(x+y, x-y)$ and the problem
of computing $f \circ f(a + O(p^n), b + O(p^m))$.  Applying $f$ twice, computing precision
with each step, yields $\left(2a + O(p^{\min(m,n)}), 2b + O(p^{\min(m, n)})\right)$.
On the other hand, $f \circ f(x,y) = (2x, 2y)$, so one may compute the result more
accurately as $(2a + O(p^n), 2b + O(p^m))$, with even more precision when $p=2$.

\subsubsection*{SOMOS 4.}

The SOMOS 4 sequence \cite{somos:89a} is defined by the recurrence
\[
u_{n+4} = \frac{u_{n+1} u_{n+3} + u_{n+2}^2}{u_n}.
\]
We shall consider the case where the initial
terms $u_0$, $u_1$, $u_2$ and $u_3$ lie in $\Zp^\times$ and have precision $O(p^N)$.
Let us first examine how the absolute precision of $u_n$ varies with $n$
if it is computed from the precision of $u_{n-4}, \ldots, u_{n-1}$ using the recurrence.
The computation of $u_{n+4}$ involves a division by $u_n$ and hence, roughly speaking,
decreases the precision by a factor $p^{\val(u_n)}$.  Hence the
step-by-step computation returns the value of $u_n$ with precision
\begin{equation}
\label{eq:SOMOS}
O(p^{N-v_n})
\quad \text{with}\quad
v_n = \val(u_0) + \cdots + \val(u_{n-4}).
\end{equation}

On the other hand, one can prove that the SOMOS 4 sequence 
exhibits the \emph{Laurent phenomenon} \cite{fomin-zelevinsky:02a}: for all integer 
$n$, there exists a polynomial $P_n$ in $\Z[X^{\pm 1}, Y^{\pm 1}, Z^{\pm 
1}, T^{\pm 1}]$ such that $u_n = P_n(u_0, u_1, u_2, u_3)$.
From the latter formula, it follows directly that if $u_0$, $u_1$,
$u_2$ and $u_3$ are known up to precision $O(p^N)$ then all $u_n$'s
are also known with the same precision. Thus, the term $v_n$ that 
appears in \eqref{eq:SOMOS} does not reflect an intrinsic loss of
precision but some numerical instability related to the algorithm used to compute $u_n$.

\begin{rmk}
From the above discussion, one can easily derive a numerically stable 
algorithm that computes the SOMOS 4 sequence: 
\begin{enumerate}
\item compute the Laurent polynomials $P_n$ using the recurrence in 
the ring $\Z[X^{\pm 1}, Y^{\pm 1}, Z^{\pm 1}, T^{\pm 1}]$
\item evaluate $P_n$ at the point $(u_0, u_1, u_2, u_3)$.
\end{enumerate}
However, computing the $P_n$'s is very time-consuming since it requires 
division in a polynomial ring with $4$ variables and the
size of the coefficients of $P_n$ explodes as $n$ grows.

In Section \ref{ssec:SOMOS-solution}, we shall design an algorithm computing 
the SOMOS 4 sequence which turns out to be, at the same time, efficient 
and numerically stable.
\end{rmk}

\subsubsection*{LU factorization.}

Let us first recall that a square matrix $M$ with coefficients in $K$ 
admits a LU factorization if it can be written as a product $LU$ where 
$L$ and $U$ is lower triangular and upper triangular respectively. 
The computation of a LU factorization appears as an important tool to 
tackle many classical questions about matrices or linear systems, and is
discussed further in Appendix \ref{ssec:matrices}.
When computing the entries of $L$ and $U$ from a $d \times d$ matrix
over $\Zp$ with entries of precision $O(p^N)$, one has a choice of algorithms:
\begin{itemize}
\item using usual Gaussian elimination and tracking $p$-adic precision 
step-by-step, the smallest precision on an entry of $L(M)$ is about
$O(p^{N - \frac{2d}{p-1}})$ on average;
\item computing $L(M)$ by evaluating Cramer-type formulae yields a
result whose every entry is known up to precision $O(p^{N - 2 \log_p 
d})$ \cite{caruso:12a}.
\end{itemize}
If $d$ is large compared to $p$, the second precision is much more
accurate than the first one. On the other hand, the second algorithm
is less efficient than the first one because evaluating Cramer-type
formulae requires many computations. 

\subsection{Lattices}

In order to make our proposals for tracking precision clear, we need
some definitions from ultrametric analysis.  See 
Schneider \cite{schneider:11a} for a more complete exposition.

Let $K$ be a field with absolute value $|\cdot| : K \to \R_{\ge 0}$.
We assume that the induced metric is an ultrametric (\emph{i.e.}
$\lvert x + y \rvert \leq \max(\lvert x \rvert, \lvert y \rvert)$)
and that $K$ is complete with respect to it.
For example, we may take $K = \Qp$ with the
$p$-adic absolute value or $K = k\lp t \rp$ with the $t$-adic absolute value.
Write $\OK$ for the ring $\{x \in K : \lvert x \rvert \le 1\}$ and assume
that $K$ contains a dense subring $R \subset K$
consisting of elements that can be represented with a finite amount of space.
For $K = \Qp$ we may choose $R = \Z[\frac1p]$ or $R = \Q$;
for $K = \Fp\lb t \rb$ we may choose $R = \Fp[t, t^{-1}]$ or $R = \Fp(t)$.

If $E$ is a $K$-vector space, possibly of infinite dimension, then an
\emph{ultrametric norm} on $E$ is a map $\Vert\cdot\Vert : E \to \R^+$ satisfying:
\begin{enumerate}[(i)]
\item $\Vert x\Vert = 0$ if and only if $x = 0$;
\item $\Vert \lambda x\Vert = |\lambda| \cdot \Vert x\Vert$;
\item $\Vert x+y\Vert \leq \max(\Vert x\Vert, \Vert y\Vert)$.
\end{enumerate}
A \emph{$K$-Banach space} is a complete normed $K$-vector space.  Note that any finite-dimensional
normed $K$-vector space is automatically complete and all norms over such a space are equivalent.
A \emph{lattice} in a $K$-Banach space $E$ is an open bounded sub-$\OK$-module of $E$.
We underline that any lattice $H$ in $E$ is also closed since its 
complement is the union of all cosets $a + H$ (with $a \not\in H$) which 
are all open.
For a $K$-Banach space $E$ and $r \in \R_{\ge 0}$, write
$$B_E(r) = \{ x \in E : \Vert x \Vert \le r \}, 
\quad B^-_E(r) = \{ x \in E : \Vert x \Vert < r\}.$$
Note that $B_E(r)$ and $B^-_E(r)$ are both lattices.

Suppose $E$ is a $K$-Banach space and $I$ a set.
A family $(x_i)_{i \in I} \subset E$ is a \emph{Banach basis} for $E$ if
every element $x \in E$ can be written $x = \sum_{i \in I} \alpha_i x_i$
for scalars $\alpha_i \in K$ with $\alpha_i \to 0$, and $\Vert x \Vert = \sup_{i \in I} \lvert \alpha_i \rvert$.
Note that if $E$ is finite dimensional then the condition $\alpha_i \to 0$ is vacuous.

Given a basis $(x_i)_{i \in I}$ and a sequence $(r_i)_{i \in I}$ with $r_i \in \R_{>0}$, the sets
\begin{align*}
B_E((x_i),(r_i)) &= \Big\{ \sum_{i \in I} \alpha_i x_i : \lvert \alpha_i \rvert \le r_i \Big\}, \\
B^-_E((x_i),(r_i)) &= \Big\{ \sum_{i \in I} \alpha_i x_i : \lvert \alpha_i \rvert < r_i \Big\}
\end{align*}
are lattices precisely when the $r_i$ are bounded.  If we have equipped $E$ with a distinguished basis
then we may drop $(x_i)$ from the notation for $B^{(-)}_E((x_i),(r_i))$.

\subsubsection*{Approximate elements.}

Suppose that $E$ is a $K$-Banach space with basis $(x_i)_{i \in I}$.
\begin{deftn}
\label{def:approximate}
\begin{itemize}
\item An element $x \in E$ is \emph{exact} if there is a finite subset $J \subseteq I$ and scalars $\alpha_j \in R$ with
\begin{equation} \label{eq:exact_elt}
x = \sum_{j \in J} \alpha_j x_j.
\end{equation}

\item An \emph{approximate element} is a pair $(x, H)$ where $x \in E$ is an exact element
and $H$ is a lattice in $E$.
\end{itemize}
\end{deftn}

The pair $(x, H)$ represents an undetermined element of the coset
$x + H$.  We will frequently write $x + O(H)$ to emphasize the fact that $H$ represents
the uncertainty in the value of the approximate element.  In the special case that $E = K = \Qp$,
we recover the standard notation $a + O(p^n)$ for an approximate $p$-adic element.
Note that the set of exact elements is dense in $E$, so every element of $E$ can be approximated.

\subsubsection*{Lattices and computers.}

Suppose that $E \simeq K^d$ is finite dimensional.  Then if $H \subset E$
is a lattice then there exist $a, b \in \Q_{>0}$ with
\begin{equation}
\label{eq:incllattice}
B_K(a)^d \subset H \subset B_K(b)^d.
\end{equation}
Set $r = \frac a b$ and $R_r = \OK/B_K(r)$.  
Then a lattice $H$ satisfying \eqref{eq:incllattice} is uniquely 
determined by its image in the quotient $B_K(b)^d / B_K(a)^d \simeq 
R_r^d$.  Since $R \cap \OK$ is dense in $\OK$, elements of $R_r$ may be
represented exactly.  Thus $H$ may be encoded as a 
$(d \times d)$ matrix with coefficients in $R_r$.  For example, when
$K = \Qp$ the ring $R_r$ is just $(\Z / p^n\Z)$ for $n = \lfloor - \log_p r \rfloor$.

\subsection{Separating precision from approximation}
\label{ssec:separation}

Definition \ref{def:approximate} encapsulates the two main practical suggestions of this paper
with regards to representing vector spaces, matrices, polynomials and power series over $K$:
\begin{enumerate}
\item one should \textbf{separate} the approximation from the precision,
\item the appropriate object to represent precision is a \textbf{lattice}.
\end{enumerate}
In the rest of this section we discuss some of the benefits made possible these choices.

Note first that using an arbitrary lattice to represent the precision of an approximate element
can reduce precision loss when compared to storing the precision of each coefficient
$\alpha_i$ in \eqref{eq:exact_elt} separately.  Recall the map 
$f : (x,y) \mapsto (x+y, x-y)$ from the beginning of the section, and write $(e_1, e_2)$
for the standard basis of $E=\Qp^2$.  Since $f$ is linear, the image of
the approximation $\bigl((a,b), B_E\bigl((e_1,e_2),(p^{-n}, p^{-m})\bigr)\bigr)$ is
$\bigl((a+b, a-b), B_E\bigl((e_1+e_2, e_1-e_2), (p^{-n}, p^{-m})\bigr)\bigr)$.
For $p \ne 2$, applying $f$ again yields $\bigl((2a, 2b), B_E\bigl((e_1, e_2), (p^{-n}, p^{-m})\bigr)\bigr)$.
By using lattices one eliminates the loss of precision seen previously.
We shall see in the next section that a similar phenomenon occurs for non-linear mappings as well.

In addition to allowing for a more flexible representation of the precision of an element,
the separation of precision from approximation has other benefits as well.  If the precision
is encoded with the approximation, certain algorithms become unusable because of their
numerical instability.  For example, the Karatsuba algorithm for polynomial multiplication \cite{karatsuba-ofman:62a}
can needlessly lose precision when operating on polynomials with inexact coefficients.
However, it works perfectly well on exact approximations, leaving the question of the precision of
the product to be solved separately.  By separating the precision, more algorithms become available.

\section{Lattices and differentials}
\label{sec:mainlemma}

Our theory of $p$-adic precision rests upon a lemma in $p$-adic 
analysis: Lemma \ref{lem:main}.  This section develops the theory 
surrounding this result; we proceed to practical consequences in 
Section \ref{sec:tracking}.

\subsection{Images of lattices under differentiable functions}

Our goal in this section is to relate the image of a lattice under a 
differentiable map to its image under the derivative.

\begin{deftn} \label{deftn : diff}
Let $E$ and $F$ be two $K$-Banach spaces, let $U$ be an open 
subset of $E$ and let $f : U \rightarrow F$ be a map. Then $f$ is called 
\emph{differentiable} at $v_0 \in U$ if there exists a continuous linear 
map $f'(v_0) : U \rightarrow W$ such that for any $\varepsilon >0$, 
there exists an open neighborhood $U_\varepsilon \subset U$ containing 
$v_0$ with
\[ 
\Vert f(v)-f(w)-f'(v_0) \cdot \left( v-w \right) \Vert 
\leq \varepsilon \Vert v-w \Vert. 
\]
for all $v, w \in U_\varepsilon$.  The linear map $f'(v_0)$ is called the \emph{differential} of $f$ at $v_0$.
\end{deftn}

\begin{rem}
\label{rem:differentiability}
This notion of differentiability is sometimes called \emph{strict differentiability};
it implies that the function $x \mapsto f'(x)$ is continuous on $U$. 
\end{rem}


\begin{deftn}
\label{def:firstorder}
Let $E$ and $F$ be two $K$-Banach spaces, $f : U \rightarrow F$ be a 
function defined on an open subset $U$ of $E$ and $v_0$ be a 
point in $U$.
A lattice $H$ in $E$ is called a \emph{first order lattice} for $f$ at 
$v_0$ if the following equality holds:
\begin{equation}
\label{eq:firstorder}
f(v_0 + H) = f(v_0) + f'(v_0) (H).
\end{equation}
\end{deftn}

We emphasize that we require an equality in \eqref{eq:firstorder}, 
and not just an inclusion! 
With this definition in hand, we are able to state our main lemma.

\begin{lem} \label{lem:main}
Let $E$ and $F$ be two $K$-Banach spaces and $f : U 
\rightarrow F$ be a function defined on an open subset $U$ of $E$.
We assume that $f$ is differentiable at some point $v_0 \in 
U$ and that the differential $f'(v_0)$ is surjective. 

Then, for all $\rho \in (0, 1]$, there exists a positive real 
number $\delta$ such that, for all $r \in (0, \delta)$, any lattice
$H$ such that $B^-_E(\rho r) \subset H \subset B^{\phantom -}_E(r)$ is a first
order lattice for $f$ at $v_0$.
\end{lem}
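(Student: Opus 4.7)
The plan is to establish both inclusions in \eqref{eq:firstorder}. Two ingredients drive the argument. First, strict differentiability provides, for every chosen $\varepsilon > 0$, an open neighborhood $U_\varepsilon \subset U$ of $v_0$ on which the linearization error of $f$ is controlled by $\varepsilon$ times the increment. Second, since $f'(v_0) \colon E \to F$ is continuous, $K$-linear and surjective, the ultrametric open mapping theorem furnishes a constant $C > 0$ such that every $y \in F$ admits a preimage $x \in E$ with $\Vert x \Vert \le C \Vert y \Vert$. I will fix $\varepsilon = \rho/(2C)$ once and for all and then choose $\delta > 0$ small enough that $v_0 + B_E(\delta) \subset U_\varepsilon$.

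The inclusion $f(v_0 + H) \subset f(v_0) + f'(v_0)(H)$ follows quickly. Given $h \in H \subset B_E(r)$, the linearization error $e = f(v_0+h) - f(v_0) - f'(v_0)(h)$ satisfies $\Vert e \Vert \le \varepsilon \Vert h \Vert \le \varepsilon r$. Lifting $e$ through $f'(v_0)$ produces $\tilde e \in E$ with $\Vert \tilde e \Vert \le C \varepsilon r < \rho r$; thus $\tilde e \in B^-_E(\rho r) \subset H$, and $f(v_0+h) - f(v_0) = f'(v_0)(h + \tilde e) \in f'(v_0)(H)$.

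For the reverse inclusion, I have to exhibit, given $h \in H$, an element $h^* \in H$ with $f(v_0+h^*) = f(v_0) + f'(v_0)(h)$. I construct it by a Picard/Newton iteration. Set $h_0 = h$ and, inductively, define the residual $e_n = f(v_0+h_n) - f(v_0) - f'(v_0)(h) \in F$, pick $s_n \in E$ with $f'(v_0)(s_n) = e_n$ and $\Vert s_n \Vert \le C \Vert e_n \Vert$, and set $h_{n+1} = h_n - s_n$. Writing $e_{n+1} - e_n = f(v_0+h_{n+1}) - f(v_0+h_n)$ and using $f'(v_0)(h_{n+1} - h_n) = -e_n$, the strict differentiability estimate applied at the pair $(v_0+h_n, v_0+h_{n+1})$ yields $\Vert e_{n+1} \Vert \le \varepsilon \Vert s_n \Vert \le \varepsilon C \Vert e_n \Vert$. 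Since $\varepsilon C = \rho/2 < 1$, both the residuals $e_n$ and the corrections $s_n$ decay geometrically; the sequence $(h_n)$ is Cauchy, converges to some $h^* \in E$, and $h^* - h = -\sum_n s_n$ has norm strictly less than $\rho r$ by the ultrametric inequality, so $h^* - h \in B^-_E(\rho r) \subset H$ and hence $h^* \in H$. Continuity of $f$, itself a consequence of strict differentiability, lets me pass to the limit and obtain $f(v_0 + h^*) = f(v_0) + f'(v_0)(h)$.

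The main technical obstacle is running the Picard iteration consistently: one must verify simultaneously that each iterate $v_0 + h_n$ stays inside $U_\varepsilon$ (so that strict differentiability can be reapplied), that the cumulative corrections stay inside $B^-_E(\rho r)$ (so that $h^*$ lands in $H$), and that the residuals contract geometrically. All three checks reduce, by induction, to the ultrametric inequality combined with the single quantitative choice $C\varepsilon < \min(1, \rho)$, and they determine how small $\delta$ must be taken as a function of $\rho$ and of the implicit constants supplied by strict differentiability and by the open mapping theorem.
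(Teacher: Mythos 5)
Your proposal is correct and takes essentially the same approach as the paper: both use the ultrametric open mapping theorem to obtain the lifting constant $C$, fix $\varepsilon$ with $\varepsilon C < \rho$, prove the forward inclusion by a direct lift of the linearization error, and establish surjectivity via the same Newton-type iteration (your $h_{n+1} = h_n - s_n$ is, after the change of sign $z_n = -s_n$, exactly the paper's $x_{n+1} = x_n + z_n$), closing by noting $H$ is closed and $f$ is continuous.
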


\begin{proof}
Without loss of generality, $v_0=0$ and $f(0)=0$. Since $f'(0)$ is surjective, the open mapping theorem provides a $C>0$ such that 
$B_F(1) \subset f'(0)(B_E(C))$.
Let $\varepsilon>0$ be such that 
$\varepsilon C < \rho$, and choose $U_\varepsilon \subset E$ as in Definition 
\ref{deftn : diff}.  We may assume $U_\varepsilon = B_E(\delta)$ for some $\delta >0$.

Let $r \in (0, \delta)$. We suppose that $H$ is a lattice with 
$B^-_E(\rho r) \subset H \subset B^{\phantom -}_E(r).$
We seek to show that $f$ maps $H$ surjectively onto $f'(0) (H)$. We first prove 
that $f(H) \subset f'(0) (H)$. Suppose $x \in H$. By differentiability at 
$0$, $\Vert f(x)-f'(0)(x) \Vert \leq \varepsilon \Vert x \Vert $. 
Setting $y=f(x)-f'(0)(x)$, we have $\Vert y \Vert \leq 
\varepsilon r$. The definition of $C$ implies that $B_F(\varepsilon r) 
\subset f'(0) (B_E(\varepsilon rC))$. Thus there exists $x' 
\in B_E(\varepsilon r C)$ such that $f'(0) (x') =y$. Since 
$\varepsilon C < \rho$, we get $x' \in B^-_E(\rho r) \subset H$ and 
then $f(x)= f'(0) (x-x') \in f'(0) (H)$.

We now prove surjectivity. Let $y \in f'(0) (H)$. Let $x_0 \in H$ 
be such that $y = f'(0) (x_0)$. We inductively define two sequences 
$(x_n)$ and $(z_n)$ as follows:
\begin{itemize}
\item $z_n$ is an element of $E$ satisfying $f'(0)(z_n) = y - 
f(x_n)$ and $\Vert z_n \Vert \leq C \cdot \Vert y - f(x_n) \Vert$ (such 
an element exists by definition of $C$), and
\item $x_{n+1}=x_n+z_n$.
\end{itemize}
For convenience, let us also define $x_{-1} = 0$ and $z_{-1}=x_0.$ We claim that the 
sequences $(x_n)$ and $(z_n)$ are well defined and take their values in 
$H$. We do so by induction, assuming that $x_{n-1}$ and $x_n$ belong to $H$
and showing that $z_n$ and $x_{n+1}$ do as well. Noticing
that 
\begin{equation}
\label{eq:mainlemma}
\begin{aligned}
y - f(x_n) &= f(x_{n-1}) + f'(0)(z_{n-1}) - f(x_n) \\
&= f(x_{n-1}) - f(x_n) - f'(0)(x_{n-1} - x_n)
\end{aligned}
\end{equation}
we deduce using differentiability that
$\Vert y - f(x_n) \Vert \leq \varepsilon \cdot \Vert x_n - x_{n-1}
\Vert$.
Since we are assuming that $x_{n-1}$ and $x_n$ lie in $H \subset
B_E(r)$, we find $\Vert y - f(x_n) \Vert \leq \varepsilon r$. Thus
$\Vert z_n \Vert \leq C \cdot \varepsilon r < \rho r$ and then
$z_n \in H$. From the relation $x_{n+1} = x_n + z_n$, we finally
deduce $x_{n+1} \in H$.

Using \eqref{eq:mainlemma} and differentiability at $0$ once more,
we get
\[
\Vert y  - f(x_n) \Vert 
\leq \varepsilon \cdot \Vert z_{n-1} \Vert \leq \varepsilon C \cdot \Vert 
y - f(x_{n-1}) \Vert,
\]
for all $n > 0$.  Therefore, $\Vert y - f(x_n) \Vert = O(a^n)$ and 
$\Vert z_n \Vert = O(a^n)$ for $a = \varepsilon C < \rho \leq 1$. 
These conditions show that $(x_n)$ is a Cauchy sequence, which converges since $E$ is complete.
Write $x$ for the limit of the $x_n$; we have $x \in H$ because $H$ is closed.
Moreover, $f$ is continuous on $H \subseteq U_\varepsilon$ since it is differentiable, and thus $y=f(x)$.
\end{proof}

We end this section with a remark on the surjectivity of $f'(v_0)$ assumed in
Lemma \ref{lem:main}. First, let us 
emphasize that this hypothesis is definitely necessary. Indeed, the lemma would 
otherwise imply that the image of $f$ is locally contained in a proper 
sub-vector-space around each point where the differential of $f$ is not 
surjective, which is certainly not true! Nevertheless, one can use Lemma \ref{lem:main}
to prove a weaker result in the context that $f'(v_0)$ is not surjective.
To do so, choose a closed sub-vector-space $W$ of $F$ such that $W + f'(v_0)(E) = 
F$. Denoting by $\pr_W$ the canonical projection of $F$ onto 
$F/W$, the composite $\pr_W \circ f$ is differentiable at 
$v_0$ with surjective differential. For a given lattice $H$, there will be various choices of $W$ to which Lemma \ref{lem:main}
applies.  For each such $W$,
\begin{equation}
\label{eq:notsurjective}
f(v_0 + H) \subset f(v_0) + f'(v_0)(H) + W;
\end{equation}
taking the intersection of the right hand side
over many $W$ yields an upper bound on $f(v_0 + H)$.

\subsection{The case of locally analytic functions}
\label{ssec:locanalytic}

In this section we make the constant $\delta$ in Lemma \ref{lem:main} explicit,
under the additional assumption that $f$ is locally analytic.
We extend the definition of such functions from finite-dimensional
$K$-vector spaces \cite {schneider:11a}*{\S 6} to $K$-Banach spaces.

\begin{deftn}
\label{def:locanal}
Let $E$ and $F$ be $K$-Banach spaces. Let $U$ be an open subset of $E$
and let $x \in U$.
A function $f \col U \rightarrow F$ is said \emph{locally analytic} at 
$x$ if there exists an open subset $U_x \subset E$ and continuous $n$-linear
maps $L_n \col E^n \rightarrow F$ for $n \ge 1$ such that
\[
f(x+h) = f(x) + \sum_{n \geq 1} L_n(h,\ldots,h)
\]
for all $h$ with $x + h \in U_x$.
\end{deftn}

\begin{rmk}
A function $f$ which is locally analytic at $x$ is \emph{a fortiori} 
differentiable at $x$, with derivative given by $L_1$.
\end{rmk}

For the rest of this section, we assume that $K$ is algebraically closed. As in Definition 
\ref{def:locanal}, we consider two $K$-Banach spaces $E$ and $F$ and a 
family of continuous $n$-linear maps $L_n : E^n \to F$. For $n \ge 1$ 
and $h \in E$, we set $f_n(h) = L_n(h, \ldots, h)$ and
$$\Vert f_n \Vert = \sup_{h \in B_E(1)} \Vert f_n(h) \Vert.$$
When the series $\sum_n f_n(h)$ converges, we denote by $f(h)$ its sum;
we shall write $f = \sum_{n \geq 0} f_n$. We assume that $f$ is defined
in a neighborhood of $0$. Under this assumption, the datum of $f$ 
uniquely determines the $f_n$'s (a consequence of Proposition 
\ref{prop:lambdaNP} below).
To such a series $f$, we attach the function $\Lambda(f) : \R \to \R 
\cup \{+\infty\}$ defined by:
$$\begin{array}{rcll}
\Lambda(f)(v) & = & 
\log \big( \sup_{h \in B^-_E(e^v)} \Vert f(h) \Vert \big)
& \text{if } f \text{ is defined on } B^-_E(e^v) \\
& = & +\infty & \text{otherwise}
\end{array}$$
The following lemma is easy and left to the reader.

\begin{lem}
\label{lem:opLambda}
Let $f = \sum_{n \geq 0} f_n$ and $g = \sum_{n \geq 0} g_n$ be two 
series as above. Then:
\begin{align*}
\Lambda(f+g) &\leq \max (\Lambda(f),\Lambda(g)) \\
\Lambda(f \times g) &\leq \Lambda(f)+\Lambda(g) \\
\Lambda(f \circ g) &\leq \Lambda(f) \circ \Lambda(g)
\end{align*}
\end{lem}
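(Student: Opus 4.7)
The plan is to deduce each of the three bounds from a corresponding pointwise norm inequality on $B^-_E(e^v)$, then take the supremum and pass to logarithms. In every case, ``defined on the ball'' passes automatically from the inputs to the composite series, so no additional checks are needed.

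For the first inequality, the ultrametric property of $\|\cdot\|$ gives $\|(f+g)(h)\| = \|f(h) + g(h)\| \le \max(\|f(h)\|, \|g(h)\|)$ for every $h$ where both $f$ and $g$ are defined. Taking $\sup$ over $h \in B^-_E(e^v)$ preserves the max on the right, and applying $\log$ yields $\Lambda(f+g)(v) \le \max(\Lambda(f)(v), \Lambda(g)(v))$. For the product, I use submultiplicativity of the norm on the codomain (viewed as a Banach algebra): $\|f(h) \cdot g(h)\| \le \|f(h)\| \cdot \|g(h)\|$. Passing to the sup over the ball and then to $\log$ turns this into $\Lambda(fg)(v) \le \Lambda(f)(v) + \Lambda(g)(v)$. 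Both of these are one-line verifications modulo unwinding the definitions.

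The composition bound is the main obstacle, because of an open-versus-closed ball subtlety. Set $w = \Lambda(g)(v)$. For $h \in B^-_E(e^v)$, we only directly obtain $\|g(h)\| \le e^w$, so a priori $g(h)$ lies in the \emph{closed} ball $B_F(e^w)$, whereas $\Lambda(f)(w)$ is a supremum over the \emph{open} ball $B^-_F(e^w)$. To bridge this gap, I would exploit the strictness $\|h\| < e^v$: choose $v' < v$ with $h \in B^-_E(e^{v'})$, so that $\|g(h)\| \le e^{\Lambda(g)(v')}$, and observe that $\Lambda(g)$ is monotone nondecreasing in $v$ (larger ball, larger sup). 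A monotone-limit argument then places $g(h)$ in $B^-_F(e^{w+\varepsilon})$ for every $\varepsilon > 0$, giving $\|f(g(h))\| \le e^{\Lambda(f)(w+\varepsilon)}$; letting $\varepsilon \to 0^+$ and using monotonicity of $\Lambda(f)$ yields $\|f(g(h))\| \le e^{\Lambda(f)(w)}$ (with the convention that if $\Lambda(f)(w) = +\infty$, the inequality is trivial). Taking $\sup$ over $h \in B^-_E(e^v)$ and applying $\log$ gives $\Lambda(f \circ g)(v) \le \Lambda(f)(\Lambda(g)(v))$, as required. The whole argument is routine once the open/closed-ball issue is navigated, which is presumably why the authors leave it to the reader.
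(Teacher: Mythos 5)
Your treatment of the sum and product is fine: the ultrametric inequality (for $f+g$) and submultiplicativity of the target norm (for $f\times g$) give pointwise bounds on $B^-_E(e^v)$, and passing to $\sup$ and then to $\log$ is routine, with a value of $+\infty$ on the right making the inequality vacuous whenever one of the two series fails to converge on the ball.

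The composition bound, however, still has a gap exactly where you flagged one, and the monotone-limit patch does not close it. Letting $\varepsilon\to 0^+$ in $\Vert f(g(h))\Vert\le e^{\Lambda(f)(w+\varepsilon)}$ requires $\Lambda(f)$ to be right-continuous at $w$, which fails in general: if $\Vert f_n\Vert=1$ for all $n$, then $\Lambda(f)(0)=0$ while $\Lambda(f)(\varepsilon)=+\infty$ for every $\varepsilon>0$, so the limit gives only the vacuous $\Vert f(g(h))\Vert\le+\infty$. Worse, if $\Vert g(h)\Vert=e^w$ exactly --- which does happen when $g_0\neq 0$ --- then $g(h)\notin B^-_F(e^w)$ and the series for $f(g(h))$ may simply diverge even though $\Lambda(f)(w)<\infty$. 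In fact the composition inequality is \emph{false} without the (implicit, but standard for composition of power series) hypothesis $g_0=0$: take $g\equiv 1$ and $f(k)=\sum_{n\ge 0}k^n$, so $\Lambda(f)(\Lambda(g)(v))=\Lambda(f)(0)=0$ while $f\circ g$ is nowhere defined. Once $g_0=0$ is assumed, what is needed is the \emph{strict} inclusion $g\bigl(B^-_E(e^v)\bigr)\subset B^-_F(e^w)$. Writing $r=\Vert h\Vert e^{-v}<1$, one has for each $n\ge 1$
\[
\Vert g_n(h)\Vert\ \le\ \Vert g_n\Vert\,\Vert h\Vert^n\ =\ \Vert g_n\Vert\,e^{vn}\,r^n\ \le\ r\cdot\Vert g_n\Vert\,e^{vn},
\]
and combining with the bound $\Vert g_n\Vert\,e^{vn}\le e^{\Lambda(g)(v)}=e^w$ (this is the nontrivial half of the relation $\Lambda(g)=\NP(g)^\star$, i.e.\ Proposition~\ref{prop:lambdaNP}, or equivalently the strict monotonicity of $\Lambda(g)$ when $g_0=0$) yields $\Vert g(h)\Vert\le r\,e^w<e^w$. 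Then the composite series converges, $\Vert f(g(h))\Vert\le e^{\Lambda(f)(w)}$ holds by the definition of $\Lambda(f)(w)$ (trivially so if $\Lambda(f)(w)=+\infty$), and your $\sup$-and-$\log$ step concludes.
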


\begin{rem}
\label{rem:opLambda}
Using Lemma \ref{lem:opLambda}, one can easily derive an upper 
bound of $\Lambda(f)$ from a formula describing $f$.
\end{rem}

The function $\Lambda(f)$ we have just defined is closely 
related to the Newton polygon of $f$. Recall that the 
Newton polygon of $f$ is the convex hull in $\R^2$ of the points
$(n, - \log \lVert f_n \rVert )$ for $n \geq 0$, together 
with the extra point $(0, +\infty)$. We denote by $\NP(f) : \R \to \R 
\cup \{+\infty\}$ the convex function whose epigraph is the Newton 
polygon of $f$.

We recall that the Legendre transform of a \emph{convex} function 
$\varphi : \R \to \R \cup \{+\infty\}$ is the function $\varphi^\star : 
\R \to \R \cup \{+\infty\}$ defined by
\[
\textstyle \varphi^\star(v) = \sup_{u \in \R} \:
\big(uv - \varphi(u)\big),
\]
for $v \in \R$.
One can check that the map $\varphi \mapsto \varphi^\star$ is an
order-reversing involution: $(\varphi^\star)^\star = \varphi$ and
$\varphi^\star \geq \psi^\star$ whenever $\varphi \leq \psi$.
We refer to \cite{Rockafellar:97} for a complete exposition on Legendre transforms.

\begin{prop} \label{prop:lambdaNP}
Keeping the above notation, we have $\Lambda (f) = \NP(f)^\star$.
\end{prop}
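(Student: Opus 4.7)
The plan is to reduce the claim to the classical one-variable non-archimedean maximum modulus principle, then recognize both sides as Legendre duals of the same discrete function. First I exploit the $n$-homogeneity $f_n(\lambda h) = \lambda^n f_n(h)$ to restrict $f$ to lines through the origin: for each $h_0 \in B_E(1)$, the series $g_{h_0}(T) = \sum_{n \geq 0} T^n f_n(h_0)$ is an $F$-valued one-variable power series satisfying $f(T h_0) = g_{h_0}(T)$ wherever both are defined. Since $\Vert f_n \Vert = \sup_{h_0 \in B_E(1)} \Vert f_n(h_0) \Vert$ and every $h \in B^-_E(e^v)$ can be written $h = T h_0$ with $\Vert h_0 \Vert \leq 1$ and $|T| < e^v$ (using that $|K^\times|$ is dense in $\R_{>0}$ because $K$ is algebraically closed), the substitution yields
$$\sup_{h \in B^-_E(e^v)} \Vert f(h) \Vert \;=\; \sup_{h_0 \in B_E(1)} \; \sup_{|T| < e^v} \Vert g_{h_0}(T) \Vert.$$

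Next I would invoke the non-archimedean maximum modulus principle in the Banach-valued setting: for $g(T) = \sum_n c_n T^n$ with coefficients $c_n \in F$ and radius of convergence at least $R$,
$$\sup_{|T| < R} \Vert g(T) \Vert \;=\; \sup_n \Vert c_n \Vert \, R^n.$$
The $\leq$ direction is immediate from the ultrametric triangle inequality. For $\geq$, density of $|K^\times|$ in $\R_{>0}$ lets me choose $|T|$ avoiding all the (countably many) slopes of the Newton polygon of the scalar series $\sum_n \Vert c_n \Vert T^n$; at such a generic $|T|$ the maximum $\Vert c_n T^n \Vert$ is attained at a unique index $n$, and the ultrametric inequality becomes an equality. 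Applying this to each $g_{h_0}$ with $R = e^v$ and swapping suprema gives
$$\sup_{h \in B^-_E(e^v)} \Vert f(h) \Vert \;=\; \sup_{n \geq 0} \Vert f_n \Vert \, e^{nv}.$$

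Taking logarithms yields $\Lambda(f)(v) = \sup_{n \geq 0}\bigl(nv - (-\log \Vert f_n \Vert)\bigr)$, which is by inspection the Legendre transform of the discrete function $\varphi$ defined by $\varphi(n) = -\log \Vert f_n \Vert$ on $\N$ and $\varphi(u) = +\infty$ otherwise. Since $\NP(f)$ is by construction the convex lower envelope of $\varphi$ (this is the role of the extra point $(0,+\infty)$), and since the Legendre transform depends only on the convex envelope of its argument (a standard fact about the involution $\varphi \mapsto \varphi^\star$ recalled just before the proposition), we conclude $\NP(f)^\star = \varphi^\star = \Lambda(f)$. The case where $e^v$ exceeds the radius of convergence is covered on both sides by the $+\infty$ convention: the left by definition of $\Lambda$, the right because $\Vert f_n \Vert e^{nv}$ is then unbounded in $n$.

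The principal obstacle will be the maximum modulus step in the second paragraph, since standard references such as \cite{schneider:11a} state the principle for scalar-valued series on finite-dimensional domains, whereas here the coefficients $f_n(h_0)$ lie in an arbitrary $K$-Banach space $F$. I would address this either by composing with continuous linear functionals on $F$ to reduce to the scalar case (handling the non-archimedean Hahn-Banach hypotheses with care), or, more robustly, by rerunning the Newton polygon argument verbatim with $\Vert \cdot \Vert$ on $F$ in place of the absolute value on $K$; the crucial fact that ultrametric equality holds when the term of maximal norm is unique is valid for any ultrametric norm.
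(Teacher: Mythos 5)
Your proof is correct and takes a genuinely different route from the paper's. You reduce the multivariate claim to the classical one-variable maximum modulus principle via the $n$-homogeneity of the $f_n$'s, writing $h = T h_0$ and applying the Gauss norm identity $\sup_{|T|<R}\Vert g(T)\Vert = \sup_n \Vert c_n\Vert R^n$ to each restriction $g_{h_0}$, then swap suprema and recognize the result as the Legendre transform of the discrete map $n \mapsto -\log\Vert f_n\Vert$, finishing by the standard fact that $\varphi^\star$ depends only on the convex lower envelope of $\varphi$. The paper instead works directly in $E$: it verifies the identity on the complement of the (dense) set of slopes of $\NP(f)$, in the finite case constructing an explicit exponential decay constant $c$ away from the maximizing index, and in the infinite case (where $\NP(f)^\star(v)=+\infty$) arguing via the quotient $B_F(R)/B^-_F(R)$ and the infinitude of the residue field $k$ to produce $\lambda \in \OK$ with $\Vert f(\lambda x_n)\Vert = R$. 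Your approach is more modular --- the Banach-valued max modulus principle and the Legendre--convex-envelope duality are each self-contained --- and, notably, it dispenses with the residue-field argument entirely: where the paper fixes $x_n$ and varies $\lambda$ to break ties among terms of equal maximal norm, you fix $h_0$ and choose $|T|\in|K^\times|$ between consecutive break radii so the maximal term is unique by construction, at the cost of having to verify the Gauss norm identity for $F$-valued coefficients, which you correctly note goes through verbatim since the ultrametric ``strict dominance implies equality'' step uses only the norm on $F$.

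One small caveat worth making explicit: you assert that if $e^v$ exceeds the radius then $\Vert f_n\Vert e^{nv}$ is unbounded; the cleaner statement is the contrapositive --- if $\Vert f_n\Vert e^{nv}$ is bounded by $M$ then $\Vert f_n(h)\Vert \le M(\Vert h\Vert/e^v)^n \to 0$ for every $h \in B^-_E(e^v)$, so $f$ is defined there --- which shows both sides are simultaneously finite or infinite without needing to untangle whether ``$f$ defined on $B^-_E(e^v)$'' is strictly equivalent to ``$e^v \le$ radius.'' With that minor clarification, the argument is complete.
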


\begin{proof}
Note that the functions $\Lambda(f)$ and $\NP(f)^\star$ 
are both left continuous. It is then enough to prove that they
coincide expect possibly on the set of slopes of $\NP(f)$,
a dense subset of $\R$.

\smallskip

Let $v \in \R$, not a slope of $\NP(f)$. We assume first that 
$\NP(f)^\star(v)$ is finite. We set $u = \NP(f)^*(v)$. The 
function $m \mapsto \NP(f)(m) - vm + u$ has the following properties:
\begin{enumerate}
\item it is piecewise affine and everywhere nonnegative,
\item it does not admit $0$ as a slope and
\item it vanishes at $x = n$ for some integer $n$ and $u = vn + \log \Vert f_n \Vert$.
\end{enumerate}
We deduce from these facts that there exists $c>0$ such that 
\[
vm-u \leq - \log \lVert f_m \rVert - c \cdot \vert n-m \vert
\]
for any $m\geq 0$.
Since $vm-u=vm-vn-\log \lVert f_n \rVert$, we get 
\[
-vn-\log \lVert f_n \rVert + c \cdot \vert n-m \vert \leq -vm-\log \lVert f_m \rVert.
\]
Therefore, for any $x \in  B_E(e^v)$ and $m \geq 0,$ we have
\[
\Vert f_m (x) \Vert \leq e^{-c \cdot \vert n-m \vert } \cdot
\lVert f_n \rVert \cdot e^{vn} \le \lVert f_n \rVert \cdot e^{vn}.
\]
Thus, the series $\sum_{m \geq 0} f_m(x)$ converges and $\Vert f(x) 
\Vert \leq \Vert f_n \Vert \cdot e^{vn}$. We then get
\begin{equation}
\label{eq:lowerLambda}
\Lambda (f)(v) \leq \log \big( \Vert f_n \Vert e^{vn} \big)
= vn + \log \Vert f_n \Vert = u.
\end{equation}
On the other hand, it follows from the definition of $\lVert f_n \rVert$ 
and the fact that $\vert K^\times \vert$ is dense in $\R$ ($K$ is algebraically closed)
that there exists a sequence $(x_i)_{i\geq 0}$ in $B^-_E(e^v)$ such that 
$\lim_{i \to \infty} \Vert f_n (x_i) \Vert= \lVert f_n \rVert \cdot e^{vn}$. Since 
 $\Vert f_m (x_i) \Vert \leq e^{-c \cdot \vert n-m \vert} \cdot
\Vert f_n \Vert \cdot e^{vn}$ for all $m$ and $i$, we get $\Vert f_m (x_i) \Vert 
< \Vert f_n (x_i) \Vert$ for $i$ large enough. For these $i$, we then 
have $\Vert f(x_i) \Vert = \Vert f_n (x_i) \Vert$. Passing to the limit 
on $i$, we find $\Lambda (f)(v) \geq u$. Comparing with 
\eqref{eq:lowerLambda}, we get $\Lambda (f)(v) = u = 
\NP(f)^\star(v)$.

\smallskip

We now assume that $\NP(f)^\star(v) = +\infty$. The function $x \mapsto 
\NP(f)(x) - vx$ is then not bounded from below. Since it is convex, it 
goes to $-\infty$ when $x$ goes to $+\infty$. By the
definition of $\NP(f)$, the expression $v n + \log \Vert f_n \Vert$
goes to infinity as $n$ grows. It is then enough to establish the
following claim:
\begin{equation}
\label{eq:claim}
\forall n \in \N, \quad 
\Lambda(f)(v) \geq v n + \log \Vert f_n \Vert - \log 2.
\end{equation}
Let $n$ be a fixed integer. If $\Vert f_n 
\Vert = 0$, there is nothing to prove. Otherwise, we consider an
element $x_n \in B^-_E(e^v)$ such that $\Vert f_n(x_n) \Vert \geq \frac 
1 2 \Vert f_n \Vert \cdot e^{vn}$. If the series $\sum_{m \geq 0} 
f_m(x_n)$ diverges, then $\Lambda(f)(v) = +\infty$ by definition and 
Eq.~\eqref{eq:claim} holds. On the other hand, if it converges, the
sequence $\Vert f_m(x_n) \Vert$ goes to $0$ as $m$ goes to infinity.
Hence it takes its maximum value $R$ a finite number of times; let us 
denote by $I \subset \N$ the set of the corresponding indices. For any
$\lambda \in \OK$, the series defining $f(\lambda x_n)$ converges 
and
$$f(\lambda x_n) \in B_F(R)
\quad \text{and} \quad
f(\lambda x_n) \equiv \sum_{m \in I} \lambda^m f_m(x_n)
\pmod {B^-_F(R)}.$$
The quotient $B_F(R) / B^-_F(R)$ is a vector space over the residue
field $k$ of $K$.  Since $k$ is infinite, there must exist $\lambda \in 
\OK$ such that $\sum_{m \in I} \lambda^m f_m(x_n)$ does not vanish
in $B_F(R) / B^-_F(R)$. For such an element $\lambda$, we have $\Vert 
f(\lambda x_n) \Vert = R \geq \frac 1 2 \Vert f_n \Vert \cdot e^{vn}$. 
The claim \eqref{eq:claim} follows.
\end{proof}

\begin{rem}
It follows from Proposition \ref{prop:lambdaNP} that $\Lambda(f)$ is a
convex function.
\end{rem}

We now study the effect of truncation on series: given $f$ as above
and a nonnegative integer $n_0$, we set
\[
f_{\geq n_0} = \sum_{n \geq n_0} f_n = f - (f_0 + f_1 + \cdots +
f_{n_0 - 1}).
\]
On the other hand, given a convex function $\varphi : \R \to \R \cup
\{+\infty\}$ and a real number $v$, we define $\varphi_{\geq v} : \R \to 
\R \cup \{\pm \infty\}$ as the highest convex function such that 
$\varphi_{\geq v} \leq \varphi$ and the function $x \mapsto 
\varphi_{\geq v}(x) - v x$ is nondecreasing. Concretely, we have:
$$\varphi_{\geq v}(x) = \inf_{y \geq 0} \, \big(\varphi(x + y) - v y 
\big).$$
When $v$ is fixed, the construction $\varphi \mapsto \varphi_{\geq v}$
is nondecreasing: if $\varphi$ and $\psi$ are two convex functions such
that $\varphi \leq \psi$, we deduce $\varphi_{\geq v} \leq \psi_{\geq v}$.

\begin{prop} \label{prop:trunc}
With the above notations, we have $\Lambda(f_{\geq n_0}) \leq
\Lambda(f)_{\geq n_0}$ for all $n_0 \in \mathbb N$.
\end{prop}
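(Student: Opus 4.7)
The plan is to translate the inequality into a comparison of Legendre transforms via Proposition \ref{prop:lambdaNP}, which gives $\Lambda(g) = \NP(g)^\star$ for any such series $g$. Writing $\varphi = \NP(f)$ and $\psi = \NP(f_{\geq n_0})$, the statement reduces to proving $\psi^\star \leq \varphi^\star_{\geq n_0}$.

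Two comparisons between $\psi$ and $\varphi$ underlie the argument. First, the finite defining points of $\psi$ are exactly those of $\varphi$ with the first $n_0$ of them pushed to $+\infty$ (since $f_0, \ldots, f_{n_0-1}$ vanish in $f_{\geq n_0}$), so $\psi \geq \varphi$ pointwise on $\R$. Second, combining the convention that $(0,+\infty)$ is adjoined to the defining set with the absence of any finite defining point of $\psi$ below $x = n_0$, one shows $\psi(u) = +\infty$ for every $u < n_0$; concretely, affine functions of the form $L(x) = -M(x - n_0) - \log\Vert f_{n_0}\Vert$ satisfy all the defining constraints of $\psi$ and have $L(u) \to +\infty$ on $(-\infty, n_0)$ as $M \to +\infty$ (and if $\Vert f_{n_0}\Vert = 0$ the same argument applies using the first $n$ with $\Vert f_n\Vert \ne 0$, or else $f_{\geq n_0} = 0$ and there is nothing to prove).

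Given these two facts, the rest is bookkeeping with sups and infs. From the second, $\psi^\star(w) = \sup_u\bigl(uw - \psi(u)\bigr)$ is effectively a supremum over $u \geq n_0$; from the first, it is bounded above by $\sup_{u \geq n_0}\bigl(uw - \varphi(u)\bigr)$. For each $y \geq 0$ and $u \geq n_0$, I would rewrite
\[
uw - \varphi(u) = \bigl(u(w+y) - \varphi(u)\bigr) - uy \leq \varphi^\star(w+y) - uy \leq \varphi^\star(w+y) - n_0 y,
\]
where the last step uses $u \geq n_0$ and $y \geq 0$. Taking the supremum over $u \geq n_0$ and then the infimum over $y \geq 0$ gives
\[
\psi^\star(w) \leq \inf_{y \geq 0}\bigl(\varphi^\star(w+y) - n_0 y\bigr) = \varphi^\star_{\geq n_0}(w),
\]
which is the desired inequality.

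The only delicate step is the identity $\psi \equiv +\infty$ on $(-\infty, n_0)$; it is precisely what allows the supremum defining $\psi^\star$ to be restricted to $u \geq n_0$ and hence unlocks the elementary estimate $uy \geq n_0 y$. Everything else follows immediately from the Legendre transform definitions together with Proposition \ref{prop:lambdaNP}.
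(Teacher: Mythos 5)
Your proof is correct, and it takes a genuinely different route than the one the paper sketches. Both arguments begin by invoking Proposition \ref{prop:lambdaNP} to translate the claim into a statement about Newton polygons, and both rely on the same two inputs on that side: $\NP(f_{\geq n_0}) \geq \NP(f)$, and $\NP(f_{\geq n_0}) \equiv +\infty$ on $(-\infty, n_0)$. After that they diverge. The paper's hint appeals to the \emph{qualitative} characterization of $\varphi_{\geq v}$ (the highest convex function $\leq \varphi$ all of whose slopes are $\geq v$) together with the duality between slopes of a Legendre transform and kinks of the original function: since $\NP(f_{\geq n_0}) \geq \NP(f)$, order reversal of the Legendre transform gives $\Lambda(f_{\geq n_0}) \leq \Lambda(f)$, and since all kinks of $\NP(f_{\geq n_0})$ sit at abscissae $\geq n_0$, all slopes of $\Lambda(f_{\geq n_0})$ are $\geq n_0$; the claim then follows from the ``highest such'' description. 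You instead take the explicit formula $\varphi_{\geq v}(x)=\inf_{y\geq 0}(\varphi(x+y)-vy)$ and unwind the conjugate to an elementary supremum/infimum estimate, which sidesteps both the slopes/kinks fact and the abstract characterization. Your version is more self-contained at the cost of a few extra lines; the paper's is shorter if one accepts the convex-duality folklore. One small remark on your justification that $\psi \equiv +\infty$ on $(-\infty,n_0)$: the affine functions $L(x) = -M(x-n_0)-\log\Vert f_{n_0}\Vert$ satisfy the constraints $L(n)\leq -\log\Vert f_n\Vert$ for \emph{all} $n\geq n_0$ simultaneously (for $M$ large) only because convergence of $f$ near $0$ bounds $\log\Vert f_n\Vert$ linearly in $n$; this is true in the setting but tacit in your write-up. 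A cleaner observation is that every defining point of $\NP(f_{\geq n_0})$ with finite ordinate has abscissa $\geq n_0$, so its convex hull lies in $\{x\geq n_0\}$ and $\psi=+\infty$ on $(-\infty,n_0)$ with no growth estimate required.
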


\begin{proof}
It follows easily from Proposition \ref{prop:lambdaNP} and the fact that 
the slopes of the Legendre transform of a convex piecewise affine 
function $f$ are exactly the abscissae of the points where $f$ is not 
differentiable.
\end{proof}

We may now provide two sufficient conditions to effectively recognize 
first order lattices.

\begin{prop}
\label{prop:locanalytic}
Let $f = \sum_{n \geq 0} f_n$ be a function as above.
Let $C$ be a positive real number satisfying 
$B_F(1) \subset f_1(B_E(C))$.
Let $\rho \in (0, 1]$ and $\nu$ be a real number such that 
\begin{equation}
\label{eq:locanalytic}
\Lambda(f)_{\geq 2} (\nu) < \nu + \log \Big( \frac \rho C \Big).
\end{equation}
Then the conclusion of Lemma \ref{lem:main} holds with $\delta = e^\nu$.
\end{prop}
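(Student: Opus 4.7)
The plan is to retrace the proof of Lemma \ref{lem:main} and make the value of $\delta$ explicit. After the standard reduction $v_0 = 0$ and $f(0) = 0$, one has $f = f_1 + f_{\geq 2}$, where $f_1$ is the differential at $0$. Inspecting that proof, the only data it uses are the open-mapping constant $C$ (given here) and, for some $\varepsilon > 0$ with $\varepsilon C < \rho$, a neighbourhood $B_E(\delta)$ on which the strict differentiability estimate holds. Since $f(v) - f(w) - f_1(v-w) = f_{\geq 2}(v) - f_{\geq 2}(w)$, it suffices to produce, for every $r \in (0, e^\nu)$ and all $v, w \in B_E(r)$, a number $\varepsilon(r)$ with $\varepsilon(r) C < \rho$ satisfying
$$\|f_{\geq 2}(v) - f_{\geq 2}(w)\| \leq \varepsilon(r)\, \|v - w\|.$$

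For the Lipschitz-type bound I would introduce the two-variable polynomial
$$P_n(s, t) = f_n(s h + t w) = \sum_{m=0}^n c_{n,m}\, s^m\, t^{n-m},$$
with $h = v - w$, obtained by expanding each $L_n$ by multilinearity (no symmetrisation needed); here $c_{n,m} \in F$. On the polydisc $\{|s| \leq r/\|h\|,\ |t| \leq 1\}$ one has $\|sh + tw\| \leq r$, so $\|P_n(s,t)\| \leq \|f_n\|\, r^n$. Because $K$ is algebraically closed, in particular its residue field is infinite, the ultrametric two-variable maximum principle applies and yields
$$\|c_{n, m}\| \leq \|f_n\|\, r^{n-m}\, \|h\|^m.$$
For $m \geq 1$, combining this with $\|h\| \leq r$ gives $\|c_{n,m}\| \leq \|f_n\|\, r^{n-1}\, \|h\|$. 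Since $f_n(v) - f_n(w) = \sum_{m=1}^n c_{n, m}$, the ultrametric inequality turns the sum into a supremum, and summing over $n \geq 2$ yields the required estimate with $\varepsilon(r) = \sup_{n \geq 2} \|f_n\|\, r^{n-1}$.

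Finally I would translate the hypothesis \eqref{eq:locanalytic} into $\varepsilon(r) C < \rho$. By Proposition \ref{prop:lambdaNP}, $r\, \varepsilon(r) = \sup_{n \geq 2} \|f_n\|\, r^n = e^{\Lambda(f_{\geq 2})(\log r)}$, so the claim reduces to $\Lambda(f_{\geq 2})(\log r) < \log r + \log(\rho/C)$. Since $\Lambda(f_{\geq 2})$ is a supremum of affine functions of slope at least $2$, the auxiliary map $v \mapsto \Lambda(f_{\geq 2})(v) - v$ is nondecreasing. Combining this monotonicity with Proposition \ref{prop:trunc} and the hypothesis gives, for every $v' = \log r \leq \nu$,
$$\Lambda(f_{\geq 2})(v') - v' \leq \Lambda(f_{\geq 2})(\nu) - \nu \leq \Lambda(f)_{\geq 2}(\nu) - \nu < \log(\rho/C),$$
which is exactly what we need.

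The step that I expect to require the most care is the two-variable maximum-principle application: the polydisc radii must be chosen precisely---$r/\|h\|$ in the variable $s$ and $1$ in $t$---so that the resulting coefficient bound retains an explicit factor of $\|h\| = \|v - w\|$ rather than merely $r^n$. Everything else is convexity bookkeeping built on Propositions \ref{prop:lambdaNP} and \ref{prop:trunc}.
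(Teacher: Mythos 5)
Your proof is correct, and it follows the paper's broad strategy (reduce to a quantitative bound on $f_{\geq 2}$ and then invoke Propositions \ref{prop:lambdaNP} and \ref{prop:trunc}), but it supplies a technical step that the paper's own proof elides. The paper asserts that it suffices to prove the one-point estimate $\Vert f_{\geq 2}(x)\Vert \leq \varepsilon\Vert x\Vert$ on $B_E(\delta)$, but the strict-differentiability hypothesis actually used in the proof of Lemma \ref{lem:main} (in the passage from \eqref{eq:mainlemma} to $\Vert y - f(x_n)\Vert \leq \varepsilon\Vert x_n - x_{n-1}\Vert$) is the two-point Lipschitz bound $\Vert f_{\geq 2}(v) - f_{\geq 2}(w)\Vert \leq \varepsilon\Vert v-w\Vert$, which does not follow by ultrametricity from the one-point bound when $\Vert v-w\Vert < \max(\Vert v\Vert, \Vert w\Vert)$. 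You prove exactly the Lipschitz bound, and your two-variable device $P_n(s,t) = f_n(sh+tw)$ combined with the ultrametric maximum principle on the polydisc $\{|s|\leq r/\Vert h\Vert,\ |t|\leq 1\}$ is a neat way to do it: it extracts the factor $\Vert v-w\Vert$ directly from the coefficient bound $\Vert c_{n,m}\Vert \leq \Vert f_n\Vert\, r^{n-m}\Vert h\Vert^m$ (the $m\geq 1$ terms), and crucially it produces estimates in terms of $\Vert f_n\Vert$ rather than $\Vert L_n\Vert$, thereby avoiding the polarization constant $|n!|^{-1}$ which would be disastrous in positive or mixed characteristic. The closing bookkeeping (computing $r\,\varepsilon(r) = e^{\Lambda(f_{\geq 2})(\log r)}$ via Proposition \ref{prop:lambdaNP}, using the monotonicity of $v \mapsto \Lambda(f_{\geq 2})(v) - v$, and applying Proposition \ref{prop:trunc} together with hypothesis \eqref{eq:locanalytic}) is exactly what is needed and matches the paper's intent. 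In short: same roadmap, but you actually pave the road where the paper only sketches it.
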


\begin{rem}
On a neighborhood of $-\infty$, the function $x \mapsto \Lambda(f)_{\geq 
2} (x) - x$ is affine with slope $1$. This implies that, for all $\rho 
\in (0,1]$, there exists $\nu$ satisfying \eqref{eq:locanalytic}. Moreover,
if $\rho$ is close enough to $0$, then one can take $\delta = e^\nu$ as a 
linear function of $\rho$.
\end{rem}

\begin{rem}
In the statement of Proposition \ref{prop:locanalytic}, one can of 
course replace the function $\Lambda(f)$ by any convex function 
$\varphi$ with $\varphi \geq \Lambda(f)$. If $f$ is given by some
formula or some algorithm, such a function $\varphi$ can be obtained
using Remark \ref{rem:opLambda}.
\end{rem}

\begin{proof}
Pick $\varepsilon$ in the interval $(e^{\Lambda(f)_{\geq 2} (\nu) - \nu}, 
\frac \rho C)$. Going back to the proof of Lemma \ref{lem:main}, we 
observe that it is enough to prove that
\begin{equation}
\label{eq:boundqueue} 
\Vert f_{\geq 2}(x) \Vert \leq \varepsilon \cdot \Vert x \Vert.
\end{equation}
for all $x \in B_E(\delta)$.
This inequality follows from Propositions \ref{prop:lambdaNP} and
\ref{prop:trunc} applied to the function $x \mapsto \frac {\Lambda_{\geq 
2}(x)} x$.
\end{proof}

\begin{rem}
It follows from the proof that Proposition \ref{prop:locanalytic} is 
still valid if $K$ is not assumed to be algebraically closed. Indeed, 
the functions $f_n$ --- and then $f$ also --- extend to an algebraic
closure $\bar K$ of $K$ and \eqref{eq:boundqueue} holds over $\bar 
K$, which is enough to conclude the result.
\end{rem}

\begin{cor}
We keep the notations of Proposition \ref{prop:locanalytic} and consider 
in addition a sequence $(M_n)_{n \geq 2}$ such that $\Vert f_n \Vert 
\leq M_n$ for all $n \geq 2$. Let $\NP(M_n)$ denote the convex function
whose epigraph is the convex hull in $\R^2$ of the points of coordinates
$(n, -\log M_n)$ for $n \geq 2$ together with the extra point $(0,
+\infty)$.

Let $\rho \in (0, 1]$ and $\nu$ be a real number such that 
$$NP(M_n)^\star (\nu) < \nu + \log \Big( \frac \rho C \Big).$$
Then the conclusion of Lemma \ref{lem:main} holds with $\delta = e^\nu$.
\end{cor}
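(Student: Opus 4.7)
The strategy is to reduce to Proposition \ref{prop:locanalytic}, or more precisely to reproduce its proof while replacing the bound coming from $\Lambda(f)_{\geq 2}(\nu)$ by the bound coming from $\NP(M_n)^\star(\nu)$. The key point is that the actual inequality used in the proof of Proposition \ref{prop:locanalytic} is $\Vert f_{\geq 2}(x)\Vert \leq \varepsilon \Vert x \Vert$ on a ball around $0$, and this inequality only involves the tail $f_{\geq 2}$, which is controlled directly by the bounds $\Vert f_n \Vert \leq M_n$.

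First, I would record the formula
\[
\NP(M_n)^\star(\nu) = \sup_{n \geq 2} \bigl(\nu n + \log M_n \bigr),
\]
which is the standard identification of the Legendre transform of the convex hull of $\{(n,-\log M_n) : n \ge 2\} \cup \{(0,+\infty)\}$ (this is essentially what is used at the start of the proof of Proposition \ref{prop:lambdaNP}). The hypothesis then rewrites as $\nu n + \log M_n < \nu + \log(\rho/C)$ for every $n \geq 2$. Picking any $\varepsilon$ in the (nonempty) interval $\bigl(e^{\NP(M_n)^\star(\nu)-\nu},\, \rho/C \bigr)$, the reformulation gives $M_n e^{\nu(n-1)} < \varepsilon$ for all $n \geq 2$.

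Now fix $r \in (0,e^\nu)$ and $x \in E$ with $\Vert x \Vert \leq r$. For $n \geq 2$, using $\Vert x \Vert \leq r < e^\nu$ and $n-1 \geq 1$, I get
\[
\Vert f_n(x) \Vert \leq M_n \Vert x \Vert^n
= M_n \Vert x \Vert^{n-1} \cdot \Vert x \Vert
\leq M_n e^{\nu(n-1)} \cdot \Vert x \Vert
< \varepsilon \Vert x \Vert.
\]
Moreover $\Vert f_n(x) \Vert \to 0$ (since $\log \Vert x \Vert < \nu$ and $\sup_n(\nu n + \log M_n)$ is finite), so the series $\sum_{n \geq 2} f_n(x)$ converges and, by the ultrametric inequality, $\Vert f_{\geq 2}(x) \Vert \leq \varepsilon \Vert x \Vert$.

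From this point on, the proof is a verbatim transcription of the argument of Proposition \ref{prop:locanalytic}: since $\varepsilon C < \rho$, one may take $U_\varepsilon = B_E(e^\nu)$ in Definition \ref{deftn : diff}, so that $\delta = e^\nu$ is a valid choice in Lemma \ref{lem:main}. The only minor subtlety I anticipate is keeping track of strict versus non-strict balls (the strict ball $B^-_E(e^\nu)$ appears in the definition of $\Lambda$, while Lemma \ref{lem:main} quantifies over $r \in (0,\delta)$); but since $r < \delta$ is strict, any $x \in B_E(r)$ satisfies $\Vert x\Vert \leq r < e^\nu$ and the estimate above applies, so no additional work is required.
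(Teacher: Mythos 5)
Your proof is correct and captures what the paper leaves implicit, since no proof of this corollary is given. The intended reduction is exactly yours: because $\Vert f_n\Vert \leq M_n$ for $n\geq 2$, the proof of Proposition \ref{prop:locanalytic} goes through verbatim once one knows $\Vert f_{\geq 2}(x)\Vert \leq \varepsilon\Vert x\Vert$ on $B_E(e^\nu)$, and your direct estimate $\Vert f_n(x)\Vert\leq M_n\Vert x\Vert^n < \varepsilon\Vert x\Vert$ establishes precisely that. One could equivalently phrase the reduction at the level of Legendre transforms (since $\Vert f_n\Vert\leq M_n$ gives $\NP(f_{\geq 2})\geq \NP(M_n)$, hence $\Lambda(f_{\geq 2}) = \NP(f_{\geq 2})^\star \leq \NP(M_n)^\star$ by Proposition \ref{prop:lambdaNP}, and the proof of Proposition \ref{prop:locanalytic} only uses an upper bound on $\Lambda(f_{\geq 2})$), but this is the same argument unwound, and your concrete computation is arguably clearer.

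Two small points worth keeping in mind, though neither is a gap specific to your argument since both are inherited from the paper's own treatment: the estimate $\Vert f_n(x)\Vert\leq \Vert f_n\Vert\cdot\Vert x\Vert^n$ uses that $\Vert x\Vert$ can be matched (or approximated from above) by $|\alpha|$ for some $\alpha\in K$, which is unproblematic under the standing assumption that $K$ is algebraically closed; and the passage from $\Vert f_{\geq 2}(x)\Vert\leq\varepsilon\Vert x\Vert$ to the two-point condition $\Vert f(v)-f(w)-f'(0)(v-w)\Vert\leq\varepsilon\Vert v-w\Vert$ required by Definition \ref{deftn : diff} is asserted rather than spelled out in the proof of Proposition \ref{prop:locanalytic}, so you are right to treat it as already settled there.
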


\begin{rem}
If $K$ has characteristic $0$ and the vector spaces $E$ and $F$ are
finite dimensional, then the $M_n$'s defined by
$$M_n = \frac 1 {|n!|} \cdot 
\sup_{\substack{1 \leq i \leq \dim E \\ \vert \underline n
\vert = n}} \,
\Big\Vert \frac{\partial^n f_i}{\partial x^{\underline n}}(0) 
\Big\Vert$$
do the job. Here $f_i$ denotes the $i$-th coordinate of $f$, the
notation $\underline n$ refers to a tuple of $(\dim F)$ nonnegative
integers and $\vert \underline n \vert$ is the sum of the coordinates
of $\underline n$.
\end{rem}

\section{Precision in practice}
\label{sec:tracking}

In this section we discuss applications of Lemma \ref{lem:main} and Proposition \ref{prop:locanalytic}
to effective computations with $p$-adic numbers and power series.

\subsection{Optimal precision tracking}
\label{ssec:opt-tracking}

We consider a function {\tt f} (in the sense of computer 
science) that takes as input an approximate element lying in an open 
subset $U$ of a $K$-Banach space $E$ and outputs another approximate
element lying in an open subset $V$ of another $K$-Banach space $F$.
In applications, this function models a continuous
mathematical function $f : U \to V$: when 
{\tt f} is called on the input $x + O(H)$, it outputs $x' + O(H')$ with 
$f(x+H) \subseteq x' + H'$. We say that {\tt f} \emph{preserves precision}
if the above inclusion is an equality; it is 
often not the case as shown in Section \ref{ssec:stepbystep}.

Let us assume now that $f$ is locally analytic on $U$ and
that $f'(x)$ is surjective. Proposition \ref{prop:locanalytic} then 
yields a rather simple sufficient condition to decide if a given lattice 
$H$ is a first order lattice for $f$ at $x$. For such a lattice, by 
definition, we have $f(x+H) = f(x) + f'(x)(H)$ and thus {\tt f}
must output $O(f'(x)(H))$ if it preserves precision. In this section we
explain how, under the above hypothesis, one can implement the function 
{\tt f} so that it always outputs the optimal precision.

\subsubsection*{One-pass computation.}

The execution of the function {\tt f} yields a factorization:
$$f = f_n \circ f_{n-1} \circ \cdots \circ f_1$$
where the $f_i$'s correspond to each individual basic step (like 
addition, multiplication or creation of variables); they are then
``nice'' (in particular locally analytic) functions. For all 
$i$, let $U_i$ denote the codomain of $f_i$. Of course $U_i$ must
contains all possible values of all variables which are defined in the 
program after the execution of $i$-th step. Mathematically, we assume
that it is an 
open subset in some $K$-Banach space $E_i$. We have $U_n = V$ and the 
domain of $f_i$ is $U_{i-1}$ where, by convention, we have set $U_0 = 
U$.
For all $i$, we set $g_i = f_i \circ \cdots \circ f_1$ and $x_i = 
g_i(x)$. 

When we execute the function {\tt f} on the input $x + O(H)$, we apply 
first $f_1$ to this input obtaining this way a first result $x_1 + 
O(H_1)$ and then go on with $f_2, \ldots, f_n$. At each step, we obtain 
a new intermediate result that we denote by $x_i + O(H_i)$. A way to 
guarantee that precision is preserved is then to 
ensure $H_i = f'_i(x)(H_{i-1}) = g_i'(x)(H)$ at each step. This can be 
achieved by reimplementing all primitives (addition, multiplication, 
\emph{etc.}) and make them compute at the same time the function $f_i$
they implement together with its differential and apply the latter to
the ``current'' lattice $H_i$.

There is nevertheless an important issue with this approach: in order to 
be sure that Lemma \ref{lem:main} applies, we need \emph{a priori} to 
compute the exact values of all $x_i$'s, which is of course not 
possible! Assuming that $g'_i(x)$ is surjective for all $i$, we can fix 
it as follows. For each $i$, we fix a first order lattice $\tilde H_i$ 
for $g_i$ at $x$. Under our assumption, such lattices always exist and 
can be computed dynamically using Proposition \ref{prop:locanalytic}
and Lemma \ref{lem:opLambda} (see also Remark \ref{rem:opLambda}).
Now, the equality $g_i(x + \tilde H_i) = x_i + g'_i(x)(\tilde H_i)$ 
means that any perturbation of $x_i$ by an element in $g'_i(x) (\tilde 
H_i)$ is induced by a perturbation of $x$ by an element in $\tilde H_i 
\subset H$. Hence, we can freely compute $x_i$ modulo $g'_i(x) (\tilde 
H_i)$ without changing the final result. Since $g'_i(x)(\tilde H_i)$ is 
a lattice in $E_i$, this remark makes possible the computation of $x_i$.

\begin{rmk}
In some cases, it is actually possible to determine suitable lattices 
$\tilde H_i$ together with their images under $g'_i(x)$ (or, at
least, good approximations of them) before starting the computation 
by using mathematical arguments. If possible, this generally helps a
lot. We shall present in \S \ref{ssec:SOMOS-solution} an example of 
this.
\end{rmk}

\subsubsection*{Two-pass computation.}

The previous approach works only if the $g'_i(x)$'s are all surjective. 
Unfortunately, this assumption is in general not fulfilled. Indeed, 
remember that the dimension of $E_i$ is roughly the number of used 
variables after the step $i$. It all $g_i'(x)$ were surjective, this 
would mean that the function {\tt f} never initializes a new variable!
In what follows, we propose another solution that does not assume the
surjectivity of $g'_i(x)$.

For $i \in \{1, \ldots, n\}$, define $h_i = f_n \circ \cdots \circ 
f_{i+1}$, so that we have $f = h_i \circ g_i$. On differentials, we 
have $f'(x) = h_i'(x_i) \circ g'_i(x)$. Since $f'(x)$ is surjective (by 
assumption), we deduce that $h'_i(x_i)$ is surjective for all $i$. Let $H'_i$ 
be a lattice in $E_i$ such that:
\begin{enumerate}[(a)] 
\item \label{item:Hi1}
$H'_i$ is contained in $H_i + \ker h'_i(x_i) = h'_i(x_i)^{-1}
\big(f'(x)(H)\big)$;
\item \label{item:Hi2}
$H'_i$ is a first order lattice for $h_i$ at $x_i$.
\end{enumerate}
By definition, we have
$h_i(x_i + H'_i) = x_n + h'_i(x_i)(H'_i) \subset x_n + f'(x)(H)$.
Therefore, modifying the intermediate value $x_i$ by an element
of $H'_i$ after the $i$-th step of the execution of {\tt f} leaves
the final result remains unchanged. In other words, it is
enough to compute $x_i$ modulo $H'_i$.

It is nevertheless not obvious to implement these ideas in practice
because when we enter in the $i$-th step of the execution of {\tt f},
we have not computed $h_i$ yet and hence are \emph{a priori} not able
to determine a lattice $H'_i$ satisfying the axioms \eqref{item:Hi1}
and \eqref{item:Hi2} above.
A possible solution to tackle this problem is to proceed in several
stages as follows: 
\begin{enumerate}[(1)]
\item \label{item:smallprec}
for $i$ from $1$ to $n$, we compute $x_i$, $f'_i(x_{i-1})$ 
at small precision (but enough for the second step) together with an
upper bound of the function $\Lambda(h \mapsto f_i(x_{i-1}+h) - 
f_i(x_{i-1}))$;
\item \label{item:determineHi}
for $i$ from $n$ to $1$, we compute $h'_i(x_i)$ and
determine a lattice $H'_i$ satisfying \eqref{item:Hi1} and 
\eqref{item:Hi2};
\item \label{item:finalcomp}
for $i$ from $1$ to $n$, we recompute $x_i$ modulo $H'_i$
and finally outputs $x_n + O\big(f'(x)(H)\big)$.
\end{enumerate}
Using relaxed algorithms for computing with elements in $K$ (\emph{cf} 
\cites{hoeven:02a, hoeven:07a, berthomieu-hoeven-lecerf:11a}), we can reuse in 
Step~\eqref{item:finalcomp} the computations already performed in 
Step~\eqref{item:smallprec}. The two-pass method we have just 
presented is then probably not much more expansive than the one-pass 
method, although it is more difficult to implement.

We conclude this section by remarking that the two-pass method seems 
to be particularly well suited to computations with lazy $p$-adics.
In this setting, a target precision is fixed and the software determines automatically the precision 
it needs on the input to achieve this output precision. To do this, it first 
builds the ``skeleton'' of the computation (\emph{i.e.} it determines the 
functions $f_i$ and eventually computes the $x_i$ at small precision when 
branching points occur and it needs to decide which branch it follows) 
and then runs over this skeleton in the reverse direction in order to 
determine (an upper bound of) the needed precision at each step.

\subsubsection*{Non-surjectivity.}

From the beginning, we have assumed that $f'(x)$ is surjective. Let us 
discuss shortly what happens when this assumption is relaxed. As it is 
explained after the proof of Lemma \ref{lem:main}, the first thing we 
can do is to project the result onto different quotients, \emph{i.e.} to 
work with the composites $\pr_W \circ f$ for a sufficiently large family 
of closed sub-vector-spaces $W \subset F$ such that $W + f'(x)(E) = F$. 
If $F$ has a natural system of coordinates, we may generally take the 
$\pr_W$'s as the projections on each coordinate. Doing this, we end up 
with a precision on each individual coordinate. Furthermore, we have the 
guarantee that each coordinate-wise precision is sharp, even if the lattice built
from them is not.

Let us illustrate the above discussion by an example: suppose that we 
want to compute the function $f : (K^n)^n \to M_n(K)$ that takes a 
family of $n$ vectors to its Gram matrix. The differential of $f$ is
clearly never surjective because $f$ takes its values in the subspace
consisting of symmetric matrices. Nevertheless, for all pairs $(i,j)
\in \{1, \ldots, n\}^2$, one can consider the composite $f_{ij} = 
\pr_{ij} \circ f$ where $\pr_{ij} : M_n(K) \to K$ takes a matrix 
$M$ to its $(i,j)$-th entry. The maps $f_{ij}$'s are differentiable and 
their differentials are generically surjective. Let $M$ be a matrix known
at some finite precision such that 
$f'_{ij} (M) \neq 0$ for all $(i,j)$. We can then apply a one- or two-pass 
computation and get $f_{ij}(M)$ together with its 
precision. Putting this together, we get the whole matrix $f(M)$ 
together with a sharp precision datum on each entry.

The study of the this example actually suggests another solution to 
tackle the issue of non-surjectivity. Indeed, remark that our $f$ above 
had not a surjective differential simply because its codomain was too 
large: if we had replaced $f : (K^n)^n \to M_n(K)$ by $g : (K^n)^n \to 
S_n(K)$ (where $S_n(K)$ denotes the $K$-vector space of symmetric matrix 
over $K$ of size $n$) defined in the same way, our problem would have 
disappeared. Of course the image of a general $f$ is rarely a sub vector 
space of $F$ but it is often a sub-$K$-manifold of $F$ (see Appendix
\ref{sec:manifold}). We can then use 
the results of Appendix \ref{sec:manifold} to study $f$ viewed as a 
function whose codomain is $f(U)$, understood that the differential of 
it has now good probability to be surjective.

\medskip

\subsubsection*{Quick comparison with floating point arithmetics.}

The two strategies described above share some similarities with usual 
floating point arithmetics over the reals. Indeed, roughly speaking, in 
each setting, we begin by choosing a large precision, we do all our 
computations up to this precision understood that when we are not sure 
about some digit, we choose it ``at random'' or using good heuristics. 
The main difference is that, in the ultrametric setting, we are able 
(under some mild hypothesis) to quantify the precision we need at each 
individual step in order to be sure that the final result is correct up 
to the required precision.

\subsection{Precision Types}
\label{ssec:types}

Using an arbitrary lattice to record the precision of an approximate 
element has the benefit of allowing computations to proceed without 
unnecessary precision loss using Lemma \ref{lem:main}.  However, while 
recording a lattice exactly is possible it does require a lot of space.  
For example, the space required to store a lattice precision for a 
single $n \times n$ matrix with entries of size $O(p^N)$ is $O(Nn^4 
\cdot \log p)$.  Conversely, the space needed to record that every entry 
has precision $O(p^N)$ is just $O(\log N)$.

\begin{deftn}
Suppose that $E$ is a $K$-Banach space, and write $\Lat(E)$ for the set of lattices in $E$.
A \emph{precision type} for a $K$-Banach space $E$ is a set $\T \subseteq \Lat(E)$ together
with a function $\round : \Lat(E) \to \T$ such that
\begin{itemize}
\item[$(\ast)$] For every lattice $H \in \Lat(E)$, the lattice $\round(H)$ is a least upper bound for $H$ under the inclusion order:
$H \subseteq \round(H)$ and if $T \in \T$ satisfies $T \subset \round(H)$ then $H \not\subseteq T$.
\end{itemize}
\end{deftn}

Different precision types are appropriate for different problems.  For example,
the final step of Kedlaya's algorithm for computing zeta functions of hyperelliptic
curves \cite{kedlaya:01a}*{\S 4: Step 3} involves taking the characteristic polynomial of the matrix of Frobenius acting
on a $p$-adic cohomology space.  Obtaining extra precision on the entries of the
matrix requires a long computation, so it is advisable to work with a precision type
that does not round too much.

The following list gives examples of useful precision types.  A description of the $\round$ function has been omitted for brevity.
\begin{itemize}
\item The \emph{lattice} precision type has $\T = \Lat(E)$.
\item In the \emph{jagged} precision type, $\T$ consists of lattices of the shape $B_E((e_i),(r_i))$ for a fixed Banach basis $(e_i)$ of $E$.
\item In the \emph{flat} precision type, $\T$ consists of lattices $B_E(r)$.  The flat precision type is useful since it takes so little space to store and it easy to compute with.
\item If $E = K_{<d}[X]$ is the space of polynomials of degree less than $d$, the \emph{Newton} precision type consists of lattices $B_E((X^i),(r_i))$ where $-\log r_i$ is a convex function of $i$.  The Newton precision type is sensible if one thinks of polynomials as functions $K \to K$, since extra precision above the Newton polygon never increases the precision of an evaluation.
\item If $E = M_{m \times n}(K)$, the \emph{column} precision type consists of lattices with identical image under all projections $\pr_i : E \to K^m$ sending a matrix to its $i$th column.  It is appropriate when considering linear maps where the image of each basis vector has the same lattice precision.
\item If $E = \Qp \lb X \rb$, the \emph{Pollack-Stevens} precision type consists of lattices $H_N := B_E((X^i),(p^{\min(i-N,0)}))$ \cite{pollack-stevens:11a}*{\S 1.5}.  It is important when working with overconvergent modular symbols since these lattices are stable under certain Hecke operators.
\end{itemize}

Note that sometimes the precision of a final result can be computed
a priori (using the methods of Appendix \ref{sec:differentials} for example).
Taking advantage of such knowledge can minimize artificial precision loss
even when using rougher precision types such as flat or jagged.
Separating precision from approximation also makes
it much easier to implement algorithms capable of processing different precision types,
since one can implement the arithmetic of the approximation separately from the logic
handling the precision.

\subsection{Application to SOMOS sequence}
\label{ssec:SOMOS-solution}

We illustrate the theory developed above by giving a simple 
toy application. Other applications will be discussed in subsequent 
articles. More precisely, we study the SOMOS 4 sequence introduced in \S 
\ref{ssec:stepbystep}. Making a crucial use of Lemma \ref{lem:main} and 
Proposition \ref{prop:locanalytic}, we design a \emph{stable} algorithm 
for computing it.

Recall from \S \ref{ssec:stepbystep} that a SOMOS 4 sequence is a 
four-term inductive sequence defined by $u_{n+4} = \frac{u_{n+2} u_{n+4} 
+ u_{n+3}^3}{u_n}$. We recall also that SOMOS sequences exhibit the 
Laurent phenomenon: it means that, if the four initial terms $u_0, u_1, 
u_2, u_3$ are some indeterminates, then each $u_n$ is a Laurent 
polynomial with coefficients in $\Z$ in these indeterminates
(see \cite{fomin-zelevinsky:02a}).
From now, we will always consider SOMOS sequences with values in $\Q_p$ 
(for some prime number $p$). We assume for simplicity that $u_0, u_1, u_2, 
u_3$ are all units in $\Z_p$. By the Laurent phenomenon, this implies 
that all $u_n$'s lie in $\Z_p$, and that if $u_0, u_1, u_2, u_3$ 
are known with finite precision $O(p^N)$ then all $u_n$ are 
known with the same absolute precision. Algorithm \ref{algo:SOMOS} presented on 
page \pageref{algo:SOMOS} performs this computation.

\begin{algorithm}[t]
\SetKwInOut{Assumption}{Assumption}
\KwIn{$a,b,c,d$ --- four initial terms of a SOMOS 4 sequence $(u_n)_{n \geq 0}$}
\KwIn{$n, N$ --- two integers}
\Assumption{$a$, $b$, $c$ and $d$ lie in $\Z_p^\times$ and are known at 
precision $O(p^N)$}
\Assumption{None of the $u_i$ $(0 \leq i \leq n$) is divisible by $p^N$}
\KwOut{$u_n$ at precision $O(p^N)$}
\BlankLine
$\prec$ $\leftarrow$ $N$\;
\For {$i$ from $1$ to $n-3$}{
  $\prec$ $\leftarrow$ $\prec + v_p(bd + c^2)$\;
  {\bf lift} 
    $b$, $c$ and $d$ arbitrarily to precision $O(p^{\prec})$\label{line:lift}\;
  $\prec$ $\leftarrow$ $\prec - v_p(a)$\;
  $e$ $\leftarrow$ $\frac{bd+c^2} a$; \hspace{1cm}
   \tcp{$e$ is known at precision $O(p^\prec)$}
  $a, b, c, d$ $\leftarrow$ $b + O(p^{\prec}), c + O(p^{\prec}), d + O(p^{\prec}), e$\;
}
\Return{$d + O(p^N)$}\;
\caption{\sc SOMOS$(a, b, c, d, n, N)$}\label{algo:SOMOS}
\end{algorithm}

We now prove that it is correct.
We introduce the function $f : \Qp^\times \times \Q_p^3 \to \Q_p^4$ defined by 
$f(a,b,c,d) = (b,c,d,\frac{bd+c^2}a)$. For all $i$, we have 
$(u_i, u_{i+1}, u_{i+2}, u_{i+3}) = f_i(u_0, u_1, u_2, u_3)$ where $f_i
= f \circ \cdots \circ f$ ($i$ times). Clearly, $f$ is  
differentiable on $\Q_p^\times \times \Q_p^3$ and its differential in the 
canonical basis is given by the matrix:
$$D(a,b,c,d) = \begin{pmatrix}
0 & 1 & 0 & 0 \\
0 & 0 & 1 & 0 \\
0 & 0 & 0 & 1 \\
-\frac{bd+c^2}{a^2} & \frac d a & \frac {2c} a & \frac b a
\end{pmatrix}$$
whose determinant is $\frac{bd+c^2}{a^2}$. Thus, if the $(i+4)$-th term 
of the SOMOS sequence is defined, the mapping $f_i$ is differentiable
at $(u_0, u_1, u_2, u_3)$ and its differential $\varphi_i = 
f'_i(u_0, u_1, u_2, u_3)$ is given by the matrix
$D_i = D(u_{i-1}, u_i, u_{i+1}, u_{i+2}) \cdots
D(u_1, u_2, u_3, u_4) \cdot D(u_0, u_1, u_2, u_3)$.
Thanks to the Laurent phenomenon, we know that $D_i$ has
integral coefficients, \emph{i.e.} $\varphi_i$ stabilizes the lattice
$\Z_p^4$.
We are now going to prove by induction on $i$ that, at the end of the 
$i$-th iteration of the loop, we have $\prec = N + v_p(\det D_i)$ and
\begin{equation}
\label{eq:congrSOMOS}
(a, b, c, d) \equiv (u_i, u_{i+1}, u_{i+2}, u_{i+3}) \pmod
{p^N \varphi_i(\Z_p^4)}.
\end{equation}
The first point is easy. Indeed, from $D_i = D(u_{i-1}, u_i, u_{i+1}, 
u_{i+2}) D_{i-1}$, we deduce $\det D_i = \det D_{i-1} \cdot 
\frac{u_i}{u_{i-3}}$ and the assertion follows by taking determinants
and using the induction hypothesis. 
Let us now establish \eqref{eq:congrSOMOS}. To avoid confusion, 
let us agree to denote by $a'$, $b'$, $c'$, $d'$ and $\prec'$ the values of 
$a$, $b$, $c$, $d$ and $\prec$ respectively at the \emph{beginning} of 
the $i$-th iteration of the loop. By induction hypothesis (or by 
initialization if $i = 1$), we have:
\begin{equation}
\label{eq:congrSOMOS2}
(a', b', c', d') \equiv (u_{i-1}, u_i, u_{i+1}, u_{i+2}) \pmod
{p^N \varphi_{i-1}(\Z_p^4)}.
\end{equation}
Moreover, we know that the determinant of $\varphi_{i-1}$ has valuation
$\prec'$. Hence \eqref{eq:congrSOMOS2} remains true if $a'$, $b'$, $c'$
and $d'$ are replaced by other values which are congruent to them modulo
$p^{\prec'}$. In particular it holds if $a'$, $b'$, $c'$ and $d'$ denotes
the values of $a$, $b$, $c$ and $d$ after the execution of line
\ref{line:lift}. Now applying Lemma \ref{lem:main} and Proposition
\ref{prop:locanalytic} to $\varphi_{i-1}$ and $\varphi_i$ (at the point 
$(u_0, u_1, u_2, u_3)$), we get:
$$f\big((u_{i-1}, u_i, u_{i+1}, u_{i+2}) + p^N \varphi_{i-1}(\Z_p^4)\big)
= (u_i, u_{i+1}, u_{i+2}, u_{i+3}) + p^N \varphi_i(\Z_p^4).$$
By the discussion above, this implies in particular that $f(a',b',c',d')$
belongs to $(u_i, u_{i+1}, u_{i+2}, u_{i+3}) + p^N \varphi_i(\Z_p^4)$. We
conclude by remarking first that $(a,b,c,d) \equiv f(a',b',c',d') \pmod 
{p^\prec \Z_p^4}$ by construction and second that $p^\prec \Z_p^4 \subset
p^N \varphi_i(\Z_p^4)$.

Finally \eqref{eq:congrSOMOS} applied with $i = n-3$ together with
the fact that $\varphi_i$ stabilizes $\Z_p^4$ imply that, when we exit
the loop, the value of $d$ is congruent to $u_n$ modulo $p^N$. Hence,
our algorithm returns the correct value.

\medskip

We conclude this section by remarking that Algorithm 
\ref{algo:SOMOS} performs computations at precision at most $O(p^{N+v})$ 
where $v$ is the maximum of the sum of the valuations of five consecutive terms among 
the first $n$ terms of the SOMOS sequence we are considering. 
Experiments show that the value of $v$ varies like $c \cdot \log n$ 
where $c$ is some constant. Assuming that we are using a FFT-like 
algorithm to compute products of integers, the complexity of Algorithm 
\ref{algo:SOMOS} is then expected to be $\tilde O(N n)$ where the 
notation $\tilde O$ means that we hide logarithmic factors.

We can compare this with the complexity of the more naive algorithm 
consisting of lifting the initial terms $u_0, u_1, u_2, u_3$ to enough 
precision and then doing the computation using a naive step-by-step 
tracking of precision. In this setting, the required original precision 
is $O(p^{N+v'})$ where $v'$ is the sum of the valuation of the $u_i$'s 
for $i$ varying between $0$ and $n$. Experiments show that $v'$ is about 
$c' \cdot n \log n$ (where $c'$ is a constant), which leads to a 
complexity in $\tilde O (N n + n^2)$. Our approach is then interesting
when $n$ is large compared to $N$: under this hypothesis, it saves
roughly a factor $n$.

\appendix

\section{Generalization to manifolds}
\label{sec:manifold}

Many natural $p$-adic objects do not lie in vector spaces:
points in projective spaces or elliptic curves,
subspaces of a fixed vector space (which lie in Grassmannians),
classes of isomorphism of certain curves (which lie in various moduli spaces),
\emph{etc.}  In this appendix we extend the formalism 
developed in Section \ref{sec:mainlemma} to a more general setting:
we consider the quite general case of 
differentiable manifolds locally modeled on ultrametric Banach spaces.
This covers all the 
aforementioned examples.

\subsection{Differentiable $K$-manifolds}
\label{ssec:manifold}

The theory of finite dimensional $K$-manifolds is presented for example 
in \cite{schneider:11a}*{Ch. 8-9}. In this section, we shall work with
a slightly different notion of manifolds which allows also Banach vector 
spaces of infinite dimension.
More precisely, for us, a \emph{ differentiable $K$-manifold} 
(or just \emph{$K$-manifold} for short) is the data of a topological 
space $V$ together with an open covering $V = \bigcup_{i \in I} V_i$ 
(where $I$ is some set) and, for all $i \in I$, an homeomorphism 
$\varphi_i : V_i \to U_i$ where $U_i$ is an open subset of a $K$-Banach space
$E_i$ such that for all $i,j \in I$ for which $V_i \cap V_j$ is 
nonempty, the composite map
\begin{equation}
\label{eq:psiij}
\psi_{ij} : 
\varphi_i(V_{ij}) \stackrel{\varphi_i^{-1}}{\longrightarrow} 
V_{ij} \stackrel{\varphi_j}{\longrightarrow} \varphi_j(V_{ij})
\quad \text{(with } V_{ij} = V_i \cap V_j \text{)}
\end{equation}
is differentiable. We recall that
the mappings $\varphi_i$ above are the so-called \emph{charts}. The
$\psi_{ij}$'s are the transition maps. The collection of $\varphi_i$'s
and $\psi_{ij}$'s is called an \emph{atlas} of $V$. In the sequel, we 
shall assume further that the open covering $V = \bigcup_{i \in I} V_i$ 
is locally finite, which means that every point $x \in V$ lies only in a 
finite number of $V_i$'s. Trivial examples of $K$-manifolds are 
$K$-Banach spaces themselves.

If $V$ is a $K$-manifold and $x$ is a point of $V$, we define the 
\emph{tangent space} $T_x V$ of $V$ at $x$ as the space $E_i$ for some 
$i$ such that $x \in V_i$. We note that if $x$ belongs to $V_i$ and 
$V_j$, the linear map $\psi'_{ij}(\varphi_i(x))$ defines an isomorphism 
between $E_i$ and $E_j$. Furthermore these isomorphisms are compatible 
in an obvious way. This implies that the definition of $T_x V$ given 
above does not depend (up to some canonical isomorphism) on the index 
$i$ such that $x \in V_i$ and then makes sense.

As usual, we can define the notion of differentiability (at some 
point) for a continuous mapping between two $K$-manifolds by viewing it 
through the charts. A  differentiable map $f : V \to V'$ induces 
a linear map on tangent spaces $f'(x) : T_x V \to T_{f(x)} V'$ for all 
$x$ in the domain $V$. It is called the \emph{differential} of $f$ at
$x$.

\subsection{Precision data}

Returning to our problem of precision, given $V$ a $K$-manifold as 
above, we would like to be able to deal with ``approximations up to some 
precision'' of elements in $V$, \emph{i.e.} expressions of the form $x + 
O(H)$ where $x$ belongs to a dense \emph{computable} subset of $V$ and 
$H$ is a ``precision datum''.
For now, we fix a $K$-manifold $V$ and we use freely the notations $I$, 
$V_i$, $\varphi_i$, \emph{etc.} introduced in \S \ref{ssec:manifold}.

\begin{deftn}
Let $x \in V$.
A \emph{precision datum} at $x$ is a lattice in the tangent space 
$T_x V$ such that for all indices $i$ and $j$ with $x \in U_i \cap
U_j$, the image of $T_x V$ in $E_i$ is a first order lattice for
$\psi_{ij}$ at $\varphi_i(x)$ (\emph{cf} Definition \ref{def:firstorder}).
\end{deftn}

\begin{rmk}
The definition of a precision datum at $x$ depends not only on $x$
and the manifold $V$ where it lies but also on the chosen atlas that
defines $V$.
\end{rmk}

\begin{lem}
\label{lem:independence}
Let $x \in V$ and $H$ be a precision datum at $x$.
The subset
$$\varphi_i^{-1}\big(\varphi_i(x) + \varphi_i'(x)(H)\big) 
\subset V$$
does not depend on the index $i$ such that $x \in V_i$.
\end{lem}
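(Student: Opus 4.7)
The plan is to unravel the definitions directly; there is very little content here beyond correctly bookkeeping the canonical identifications. First I fix two indices $i, j$ with $x \in V_{ij} := V_i \cap V_j$, and write $H_i$ (resp.\ $H_j$) for the image of $H$ in $E_i$ (resp.\ $E_j$) under the canonical identification of $T_x V$ with $E_i$ (resp.\ $E_j$) coming from the chart $\varphi_i$ (resp.\ $\varphi_j$). Under these identifications $\varphi_i'(x)$ and $\varphi_j'(x)$ become the identity, so the statement reduces to the equality
\[
\varphi_i^{-1}\bigl(\varphi_i(x) + H_i\bigr) = \varphi_j^{-1}\bigl(\varphi_j(x) + H_j\bigr).
\]

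Next I would relate $H_i$ and $H_j$ through the transition map. As recalled in \S\ref{ssec:manifold}, the two identifications of $T_x V$ with $E_i$ and $E_j$ differ by the isomorphism $\psi_{ij}'(\varphi_i(x))\colon E_i \to E_j$, so that $H_j = \psi_{ij}'(\varphi_i(x))(H_i)$. The hypothesis that $H$ is a precision datum at $x$ says exactly that $H_i$ is a first order lattice for $\psi_{ij}$ at $\varphi_i(x)$, hence by \eqref{eq:firstorder} applied to $f = \psi_{ij}$ and $v_0 = \varphi_i(x)$,
\[
\psi_{ij}\bigl(\varphi_i(x) + H_i\bigr) = \varphi_j(x) + \psi_{ij}'(\varphi_i(x))(H_i) = \varphi_j(x) + H_j.
\]
Since $\psi_{ij} = \varphi_j \circ \varphi_i^{-1}$ on $\varphi_i(V_{ij})$, applying $\varphi_j^{-1}$ to both sides produces the desired equality of subsets of $V$.

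The only mildly delicate point is domain hygiene: to invoke the first order lattice property one needs $\varphi_i(x) + H_i$ to lie inside $\varphi_i(V_{ij})$, which is the domain of $\psi_{ij}$. I expect this to be essentially automatic, since the equation \eqref{eq:firstorder} is stated inside the domain of the map and is already assumed in the hypothesis on $H$; the symmetric hypothesis with $i$ and $j$ exchanged takes care of the reverse viewpoint. So the main obstacle is really just keeping track of the three identifications between $T_x V$, $E_i$, and $E_j$; once they are spelled out, the lemma is the tautological statement that first order lattices transform correctly across charts.
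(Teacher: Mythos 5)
Your proof is correct and matches the paper's argument: both reduce the claimed independence to the identity $\psi_{ij}(\varphi_i(x) + H_i) = \psi_{ij}(\varphi_i(x)) + \psi_{ij}'(\varphi_i(x))(H_i)$, which is exactly the first-order-lattice hypothesis built into the definition of a precision datum. You spell out the identifications and the domain point more explicitly than the paper does, but the underlying reasoning is the same.
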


\begin{proof}
Let $i$ and $j$ be two indices such that $x$ belongs to $V_i$ and $V_j$. 
Set $x_i = \varphi_i(x) \in E_i$ and $H_i = \varphi'_i(x)(H)$. The 
equality 
\[
\varphi_i^{-1}\big(\varphi_i(x) + \varphi_i'(x)(H)\big) 
 = \varphi_j^{-1}\big(\varphi_j(x) + \varphi_j'(x)(H)\big)
 \]
is clearly equivalent to
$\psi_{ij}(x_i + H_i) = \psi_{ij}(x_i) + \psi'_{ij}(x_i)(H_i)$
and the latter holds because $H_i$ is a first order lattice
for $\psi_{ij}$ at $x_i$.
\end{proof}

We are now in position to define $x + O(H)$.

\begin{deftn}
Let $x \in V$ and $H$ be a precision datum at $x$. We set
$$x + O(H) = \varphi_i^{-1}\big(\varphi_i(x) + \varphi_i'(x)(H)\big)
\subset V$$
for some (equivalenty, all) $i$ such that $x \in V_i$.
\end{deftn}

\subsubsection*{Change of base point.}

In order to restrict ourselves to elements $x$ lying in a dense 
computable subset, we need to compare $x_0 + O(H_0)$ with varying $x + 
O(H)$ when $x$ and $x_0$ are close enough. Let us first examine the 
situation in a fixed given chart: we fix some index $i \in I$ and pick 
two elements $x_0$ and $x$ in $V_i$. We consider in addition a lattice 
$\tilde H_0$ in $E_i$ --- which should be think as $\varphi'_i(x_0) 
(H_0)$ --- and we want to produce a lattice $\tilde H$ such that 
$\varphi_i(x_0) + \tilde H_0 = \varphi_i(x) + \tilde H$. Of course 
$\tilde H = \tilde H_0$ does the job as soon as 
$\varphi_i(x) - \varphi_i(x_0) \in \tilde H_0$. 
Now, we remark that the tangent spaces $T_{x_0} V$ and $T_x V$ are both 
isomorphic to $E_i$ via the maps $\varphi'_i(x_0)$ and $\varphi'_i(x)$ 
respectively. A natural candidate for $H$ is then:
\begin{equation}
\label{eq:Hprime}
H = \left(\varphi'_i(x)^{-1} \circ \varphi'_i(x_0)\right) (H_0).
\end{equation}
With this choice, $x + O(H) = x_0 + O(H_0)$ provided that $x$ and $x_0$ 
are close enough in the following sense: the difference $\varphi_i(x) - 
\varphi_i(x_0)$ lies in the lattice $\varphi'_i(x_0)(H_0)$. We 
furthermore have a property of independence on $i$.

\begin{prop}
Let $x_0 \in V$ and $H_0$ be a precision datum at $x_0$.
Then, for all $x$ sufficiently close to $x_0$,
\begin{enumerate}[(i)]
\item the lattice $H$ defined by \eqref{eq:Hprime} does not depend 
on $i$ and is a precision datum at $x$, and
\item we have $x + O(H) = x_0 + O(H_0)$.
\end{enumerate}
\end{prop}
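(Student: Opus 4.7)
The plan is to unwind the definition (\ref{eq:Hprime}) and exploit the continuity of the transition differentials. I fix an index $i$ with $x_0 \in V_i$ and set $K_0 := \varphi'_i(x_0)(H_0) \subset E_i$; I write $H^{(i)}$ for the lattice produced by (\ref{eq:Hprime}) using chart $i$, so that by construction $\varphi'_i(x)(H^{(i)}) = K_0$. For $x$ sufficiently close to $x_0$ the local finiteness of the atlas restricts attention to a finite list of relevant indices, so it is enough to establish the claims one index $j$ at a time.

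For the chart-independence in (i), let $j$ be another index with $x_0 \in V_j$. I would differentiate $\varphi_j = \psi_{ij} \circ \varphi_i$ at both $x_0$ and $x$ and transport everything to $E_i$ via the identification $\varphi'_i(x) \col T_x V \xrightarrow{\sim} E_i$; after the chain rule, the equality $H^{(i)} = H^{(j)}$ reduces to
\[
A(K_0) = B(K_0), \qquad A := \psi'_{ij}(\varphi_i(x_0)), \quad B := \psi'_{ij}(\varphi_i(x)).
\]
Both $A$ and $B$ are topological isomorphisms (since $\psi_{ji}$ is a differentiable inverse to $\psi_{ij}$), and by Remark \ref{rem:differentiability} the map $y \mapsto \psi'_{ij}(y)$ is continuous at $\varphi_i(x_0)$. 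So for $x$ close enough to $x_0$, the operator $T := A^{-1}B - \mathrm{Id}$ has operator norm smaller than $r/R$, where $B_{E_i}(r) \subset K_0 \subset B_{E_i}(R)$. A direct ultrametric estimate then gives $T(K_0) \subset B_{E_i}(r) \subset K_0$, and the Neumann series for $(\mathrm{Id}+T)^{-1}$ converges and likewise preserves $K_0$, yielding $A(K_0) = B(K_0)$.

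Next, to verify that $H := H^{(i)}$ is a precision datum at $x$, I need to check that $K_0$ is a first order lattice for $\psi_{ij}$ at $\varphi_i(x)$ for every $j$ with $x \in V_i \cap V_j$. Shrinking the neighborhood of $x_0$ so that $\varphi_i(x) - \varphi_i(x_0) \in K_0$, we have $\varphi_i(x) + K_0 = \varphi_i(x_0) + K_0$; combining the first order property of $K_0$ at $\varphi_i(x_0)$ with the identity $A(K_0) = B(K_0)$ from the previous step and the observation that $\psi_{ij}(\varphi_i(x)) - \psi_{ij}(\varphi_i(x_0)) \in A(K_0)$ (itself a consequence of the first order property at $\varphi_i(x_0)$), one obtains
\[
\psi_{ij}(\varphi_i(x) + K_0) = \psi_{ij}(\varphi_i(x_0)) + A(K_0) = \psi_{ij}(\varphi_i(x)) + B(K_0),
\]
as required. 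Part (ii) is then immediate: expressed in chart $i$, the identity $x + O(H) = x_0 + O(H_0)$ reads $\varphi_i(x) + K_0 = \varphi_i(x_0) + K_0$, which is exactly the closeness condition used above. The main obstacle I anticipate is simply calibrating ``sufficiently close'': the neighborhood of $x_0$ must be chosen small enough that the operator norm of $A^{-1}B - \mathrm{Id}$ is small, the displacement $\varphi_i(x) - \varphi_i(x_0)$ lies in $K_0$, and analogous conditions hold for every index $j$ in the finite list of charts to be considered; local finiteness guarantees that such a common neighborhood exists.
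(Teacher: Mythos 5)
Your proof is correct, and your reduction of chart-independence to the single equality $A(K_0)=B(K_0)$ in $E_j$ is exactly what the paper's argument amounts to: the paper works with the maps $f_i = \varphi'_i(x)^{-1}\circ\varphi'_i(x_0)\colon T_{x_0}V\to T_xV$ and proves $f_i(H_0)=f_j(H_0)$, which after transporting via $\varphi'_i(x)$ and the chain rule for $\varphi_j=\psi_{ij}\circ\varphi_i$ is the same statement. Where you genuinely diverge is in how the equality of the two lattices is obtained. The paper decomposes \emph{additively}: it writes $f_j=f_i+(f_j-f_i)$, observes that $(f_j-f_i)(H_0)$ is contained in both $f_i(H_0)$ and $f_j(H_0)$ once $f_j-f_i$ is small (this is what the openness of $H_0$ buys), and concludes coset by coset. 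You instead factor \emph{multiplicatively}, writing $A^{-1}B=\mathrm{Id}+T$ with $\Vert T\Vert<r/R$, checking $T(K_0)\subset K_0$, and letting the Neumann series for $(\mathrm{Id}+T)^{-1}$ give the reverse inclusion. Both arguments rest on the same two ingredients — continuity of the transition differential (Remark \ref{rem:differentiability}) and the open/bounded/closed structure of a lattice — and neither is materially shorter; a modest advantage of your version is that it invokes the closedness of lattices explicitly (to pass to the limit of the Neumann series), a fact the paper establishes but does not reuse here, while the paper's additive version sidesteps any need for $A$ to be invertible at this stage.

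Two further remarks. First, you spell out the verification that $H$ is a precision datum at $x$, which the paper dismisses as ``easy and left to the reader''; your chain of coset equalities is correct. Second, both your argument and the paper's implicitly assume that every chart index $j$ arising in the verification at $x$ also contains $x_0$ (otherwise $\psi_{ij}$ is not defined at $\varphi_i(x_0)$ and the first-order property there cannot be invoked). Under the paper's stated \emph{point-finite} notion of a locally finite atlas this need not hold for $x$ arbitrarily near $x_0$; what is really wanted is the standard local-finiteness condition (a neighborhood of $x_0$ meeting only finitely many charts, all containing $x_0$ after shrinking). This is a shared, minor imprecision rather than a defect specific to your write-up.
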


\begin{proof}
We first prove (i). For an index $i$ such that $x, x_0 \in V_i$, let us 
denote by $f_i : T_{x_0} V \to T_x V$ the composite $\varphi'_i(x)^{-1} 
\circ \varphi'_i(x_0)$. Given an extra index $j$ satisfying the same
assumption, the difference $f_i - f_j$ goes to $0$ when $x$ converges to 
$x_0$ (see Remark \ref{rem:differentiability}). Since $H_0$ is open in 
$T_{x_0} V$, this implies that $(f_j - f_i)(H_0)$ contains $f_i(H_0)$ 
and $f_j(H_0)$ if $x$ and $x_0$ are close enough. Now, pick $w \in 
f_j(H_0)$ and write it $w = f_j(v)$ with $v \in H_0$. Then $w$ is equal 
to $f_i(v) + (f_j - f_i)(v)$ and thus belongs to $f_i(H_0)$ because each 
summand does. Therefore $f_j(H_0) \subset f_i(H_0)$. The inverse 
inclusion is proved in the same way. The fact that $H$ is a 
precision datum at $x$ is easy and left to the reader.
Finally, if $x$ is close enough to $x_0$, it is enough to check (ii) in 
the charts but this was already done.
\end{proof}

\subsection{Generalization of the main Lemma}

With above definitions, Lemma \ref{lem:main} extends to 
manifolds. To do so, we first need to define 
a norm on the tangent space $T_x V$ (where $V$ is some $K$-variety and 
$x$ is a point in $V$). There is actually in general no canonical choice 
for this. Indeed, let us consider a $K$-manifold $V$ covered by charts 
$U_i$'s ($i \in I$) which are open subset of $K$-Banach spaces $E_i$'s. If $x$ 
is a point in $V$, the tangent space $T_x V$ is by definition isomorphic 
to $E_i$ for each index $i$ such that $x \in V_i$. A natural norm on 
$T_x V$ is then the one obtained by pulling back the norm on $E_i$. 
However, since the transition maps are not required to be isometries, 
this norm depends on the choice of $i$. They are nevertheless all
equivalent because the transition maps are required to be continuous.

In the next lemma, we choose any of the above norms for $T_x V$.

\begin{lem}
Let $V$ and $W$ be two $K$-manifolds. 
Suppose that we are given a differentiable function $f : V \to W$, 
together with a point $x \in V$ such that $f'(x) : T_x V \to T_{f(x)} W$ 
is surjective. 

Then, for all $\rho \in (0, 1]$, there exists a positive real 
number $\delta$ such that, for all $r \in (0, \delta)$, any lattice
$H$ in $T_x V$ such that $B^-_{T_x V}(\rho r) \subset H \subset 
B^{\phantom -}_{T_x V}(r)$ is a first
order lattice for $f$ at $x$.
\end{lem}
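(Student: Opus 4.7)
The plan is to reduce to the Banach space version, Lemma \ref{lem:main}, by passing to charts. On manifolds, ``$H$ is a first order lattice for $f$ at $x$'' should be interpreted as the equality $f(x+O(H)) = f(x) + O(f'(x)(H))$ inside $W$, which implicitly requires that $H$ is a precision datum at $x$ and that $f'(x)(H)$ is a precision datum at $f(x)$. Thus the proof must simultaneously verify three things: $H$ is a first order lattice for every transition map at $x$, $f'(x)(H)$ is a first order lattice for every transition map at $f(x)$, and the local representation $\tilde f$ of $f$ sends $H$ onto its image with equality.

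First I would fix charts $(V_{i_0}, \varphi_{i_0})$ and $(W_{j_0}, \psi_{j_0})$ containing $x$ and $f(x)$ respectively, and set $\tilde f = \psi_{j_0} \circ f \circ \varphi_{i_0}^{-1}$, a differentiable map between open subsets of the Banach spaces $E_{i_0}$ and $F_{j_0}$. Via the canonical chart identifications $T_x V \simeq E_{i_0}$ and $T_{f(x)}W \simeq F_{j_0}$, the differential $f'(x)$ is identified with $\tilde f'(\varphi_{i_0}(x))$ and inherits surjectivity, so Lemma \ref{lem:main} applied to $\tilde f$ supplies a constant $\delta_1>0$ realising the desired sandwich condition.

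Next I would handle the transition maps. By local finiteness, only finitely many indices $i$ satisfy $x \in V_i$ (and similarly on the $W$ side), and each transition $\psi_{i_0,i}$ (resp.\ $\psi_{j_0,j}$) is a diffeomorphism whose differential at the relevant point is a topological isomorphism, hence surjective; Lemma \ref{lem:main} applied to each yields further constants. To carry the sandwich property through to the $W$-side transitions, I would invoke the open mapping theorem: it produces $c, C>0$ such that any $H$ with $B^-(\rho r) \subset H \subset B(r)$ has image under $\tilde f'(\varphi_{i_0}(x))$ sandwiched between $B^-(c\rho r)$ and $B(Cr)$. Applying Lemma \ref{lem:main} to the $W$-side transitions with parameter $\rho' = c\rho/C$ then yields the target-side constants. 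Taking $\delta$ to be the minimum of $\delta_1$, the $V$-side constants, and the $W$-side constants divided by $C$ ensures that for every $r<\delta$ and every $H$ in the sandwich, the three required first-order properties hold simultaneously; the desired equality $f(x+O(H)) = f(x) + O(f'(x)(H))$ then follows by unwinding the chart-based definitions of $x+O(H)$ and $f(x) + O(f'(x)(H))$.

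The main obstacle is the careful bookkeeping of all these constants, and in particular verifying that the image $\tilde f'(\varphi_{i_0}(x))(H)$ itself admits a sandwich description of the shape required by Lemma \ref{lem:main}: this is where the open mapping theorem is essential, since $\tilde f'(\varphi_{i_0}(x))$ may have a large kernel. Once the constants are aligned, no further analytic input is required beyond Lemma \ref{lem:main}.
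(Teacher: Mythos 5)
Your proposal is correct and takes essentially the same route as the paper, which disposes of the lemma with the single sentence ``Apply Lemma \ref{lem:main} in charts''; your longer argument simply makes explicit the chart-independence (precision-datum) verifications that this terse proof leaves implicit, using local finiteness to restrict to finitely many transition maps and Lemma \ref{lem:main} to each. One small technical note: the upper containment $\tilde f'(\varphi_{i_0}(x))(H) \subset B(Cr)$ follows from boundedness (continuity) of the differential rather than from the open mapping theorem, which is what supplies only the lower containment $B^-(c\rho r) \subset \tilde f'(\varphi_{i_0}(x))(H)$.
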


\begin{proof}
Apply Lemma \ref{lem:main} in charts.
\end{proof}

\begin{rmk}
The constant $\delta$ that appears in the lemma depends (up to
some multiplicative constant) on the norm that we have chosen on $T_x V$. 
However, once this norm is fixed, and assuming further that $V$ and $W$ 
are locally analytic $K$-manifolds and the mapping $f$ is locally 
analytic as well, the constant $\delta$ can be made explicit using the 
method of Section \ref{ssec:locanalytic}.
\end{rmk}

\subsection{Examples}
\label{ssec:exmanifold}

We illustrate the theory developped above by some classical examples,
namely elliptic curves and grassmannians.

\subsubsection*{Elliptic curves.}

In this example, we assume for simplicity that $K$ does not have characteristic~$2$.
Let $a$ and $b$ be two elements of $K$ such that $4 a^3 + 27 b^2 
\neq 0$ and let $E$ be the subset of $K^2$ consisting of the pairs 
$(x,y)$ satisfying the usual equation $y^2 = x^3 + a x + b$.
Let $\pr_x : E \to K$ (resp. $\pr_y : E \to K$) denote the map that 
takes a pair $(x,y)$ to $x$ (resp. to $y$).

We first assume that $a$ and $b$ lie in the subring $R$ of exact elements. For each point $P_0 = 
(x_0, y_0)$ on $E$ except possibly a finite number of them, the map 
$\pr_x$ define a diffeomorphism from an open subset containing $P_0$ to 
an open subset of $K$; the same is true for $\pr_y$. Moreover, around each 
$P_0 \in E$, at least one of these projections satisfies the above condition.
Hence the maps $\pr_x$ and $\pr_y$ define together an atlas of $E$, giving $E$ the structure of 
a $K$-manifold.

Let $P_0$ be a point in $E$ around which $\pr_x$ and $\pr_y$ both define 
charts. Lemma \ref{lem:independence} then tells us that a 
precision datum on $x$ determines a precision datum on $y$ and vice 
versa. Indeed, in a neighborhood of $P_0$ we can write $y = \sqrt{x^3 + a x + b}$ 
(for some choice of square root) and find the 
precision on $y$ from the precision on $x$ using Lemma \ref{lem:main}. We can go 
in the other direction as well by writing $x$ locally as a function of 
$y$. A precision datum at $P_0$ is then nothing but a precision datum
on the coordinate $x$ or on the coordinate $y$, keeping in mind that 
each of them determines the other. Viewing a precision datum at $P_0$ as 
a lattice in the tangent space is a nice way to make it canonical but in 
practice we can just choose one coordinate and track precision
only on this coordinate.

We conclude this example by showing a simple method to transform a
precision datum on $x$ to a precision datum on $y$ and \emph{vice versa}.
Differentiating the equation of the elliptic curve, we get:
\begin{equation}
\label{eq:diffec}
2 y \cdot dy = (3 x^2 + a) \cdot dx.
\end{equation}
In the above $dx$ and $dy$ should be thought as a little perturbation of 
$x$ and $y$ respectively. Equation \eqref{eq:diffec} then gives a linear 
relation between the precision on $x$ (which is represented by $dx$) and 
those on $y$ (which is represented by $dy$). This relation turns out to
correspond exactly to the one which is given by Lemma \ref{lem:main}.

Finally, consider the case where $a$ and $b$ are themselves given with 
finite precision and $E$ is not fully determined. So 
we cannot consider it as a $K$-manifold and the above discussion does not 
apply readily. Nevertheless, we can always consider the submanifold of 
$K^4$ consisting of all tuples $(a,b,x,y)$ satisfying $y^2 = x^3 + a x + 
b$. The projections on the hyperplanes $a = 0$, $b = 0$, $x = 0$ and $y = 
0$ respectively define charts of this $K$-manifold. From this, we see 
that a precision datum on a point of the ``not well determined'' elliptic 
curve $E$ is a precision datum on a tuple of three variables among $a$, 
$b$, $x$ and $y$.

\subsubsection*{Grassmannians.}

Let $d$ and $n$ be two nonnegative integers such that $d \leq n$. The 
Grassmannian $\Grass(d,n)$ is the set of all sub-vector spaces of $K^n$ 
of dimension $d$. It defines an algebraic variety over $K$ and hence
\emph{a fortiori} a $K$-manifold.
Concretely, a vector space $V \subset K^n$ of dimension $d$ is given by 
a rectangular matrix $M \in M_{d,n}(K)$ whose rows form a basis of $V$ 
and two such matrices $M$ and $M'$ define the same vector space if there 
exists $P \in \GL_d(K)$ such that $M = P M'$. Performing row echelon, we 
find that we can always choose the above matrix $M$ in the particular
shape:
\begin{equation}
\label{eq:grass}
M = \begin{pmatrix} I_d & N \end{pmatrix} \cdot P
\end{equation}
where $I_d$ denotes the $(d \times d)$ identity matrix, $N \in M_{d, 
n-d}(K)$ and $P$ is a permutation matrix of size $n$. Moreover two 
such expressions with the same $P$ necessarily coincide. Hence each 
permutation matrix $P$ defines a chart $U_P \subset \Grass(d,n)$ which
is canonically diffeomorphic to $M_{d, n-d}(K) \simeq K^{d(n-d)}$.

In other words, if $V$ is a subspace of $K^n$ of dimension $d$, we 
represent it as a matrix $M$ of the shape \eqref{eq:grass} (using row 
echelon) and a precision datum at $V$ is nothing but a precision datum on 
the matrix $N$. If we choose another permutation matrix to represent $V$, 
say $P'$, we end up with another matrix $N'$; the matrices $N$ and $N'$ 
are then related by a simple relation. Differentiating it, we find a 
formula for translating the precision datum expressed in the chart $U_P$ 
to the same precision datum expressed in the chart $U_{P'}$.
Of course, in practice, when we are doing computations on subspaces of
$K^n$ (like sum or intersection), we represent the spaces in charts as
above and perform all the calculations in these charts.

\section{Differential of usual operations}
\label{sec:differentials}

We saw in the core of the article that the differential of an operation 
encodes the intrinsic loss/gain of precision when performing this 
operation. The aim of this appendix is to compute the differential of 
many common operations on numbers, polynomials and matrices. Surprisingly, 
we observe that all differentials we will consider are rather easy to 
compute even if the underlying operation is quite involved (\emph{e.g.} 
Gr\"obner basis).

In what follows, we use freely the ``method of physicists'' to compute 
differentials: given a function $f$ differentiable at some point $x$, we 
consider a small perturbation $dx$ of $x$ and write $f(x+dx) = y + dy$ 
by expanding LHS and neglecting terms of order $2$. The differential of 
$f$ at $x$ is then the linear mapping $dx \mapsto dy$.

\subsection{Numbers}

The most basic operations are, of course, sums, products and inverses of 
elements of $K$. Their differential are well-known and quite easy to 
compute: if $z = x + y$ (resp. $z = xy$, resp $z = \frac 1 x$), we have 
$dz = dx + dy$ (resp. $dz = x \cdot dy + dx \cdot y$, resp $dz = - 
\frac{dx}{x^2}$).

Slightly more interesting is the $n$-th power map $f$ from $K$ to 
itself. Its differential $f'(x)$ is found by differentiating the 
equation $y = x^n$, obtaining $dy = n x^{n-1} dx$.
Hence $f'(x)$ maps the ball $B_K(r)$ to $B_K(\lvert nx^{n-1} \rvert \cdot r)$. According to Lemma \ref{lem:main}, this 
means that the behavior of the precision depends on the absolute value 
of $n$. By the ultrametric inequality, we always have $\lvert n \rvert \leq 1$ but 
may have $\lvert n \rvert = 0$ if the characteristic of $K$ divides $n$ or 
$\lvert n \rvert = p^{-k}$ if $K$ is $p$-adic with $\val(n) = k$.  In the first case,
$f'(x)$ also vanishes and Lemma \ref{lem:main} does 
not apply.  In the second, Lemma \ref{lem:main} reflects the well-known fact that
raising a $p$-adic number to the $p^k$-th power increases the precision 
by $k$ extra digits.

\subsection{Univariate polynomials}
\label{ssec:polynomials}

For any integer $d$, let us denote by $K_{< d}[X]$ the set of 
polynomials over $K$ of degree $< d$. It is a finite dimensional vector 
space of dimension $d$. The affine space $X^d + K_{< d}[X]$ is then 
the set of monic polynomials over $K$ of degree $d$. We denote it by 
$K_d[X]$.

\subsubsection*{Evaluation and interpolation.}

Beyond sums and products (which can be treated as before), two basic 
operations involving polynomials are evaluation and interpolation.
Evaluation models the function $(P,x) \mapsto P(x)$, where
$P$ is some polynomial and $x \in K$. Differentiating
it can be done by computing
\begin{equation}
\label{eq:diffeval}
(P + dP)(x + dx) = P(x + dx) + dP(x + dx) = P(x) + P'(x) dx + dP(x).
\end{equation}
Here $P'$ denotes the derivative of 
$P$. The differential at $P$ is then the linear map $(dP, dx) \mapsto 
P'(x) dx + dP(x)$.

As for interpolation, we consider a positive integer $d$ and the partial 
function $f : K^{2d} \to K_{< d}[X]$ which maps the tuple $(x_1, y_1, 
\ldots, x_d, y_d)$ to the polynomial $P$ of degree less than $d$ such 
that $P(x_i) = y_i$ for all $i$. The polynomial $P$ exists and is unique 
as soon as the $x_i$'s are pairwise distinct; the above function $f$ is 
then defined on this open set. Furthermore, if $(x_1, y_1, \ldots, x_d, 
y_d)$ is a point in it and $P$ denotes the corresponding interpolation 
polynomial, \eqref{eq:diffeval} shows that $d y_i = P'(x_i) dx_i + 
dP(x_i)$ for all $i$. For this, we can compute $dP(x_i)$ from $d x_i$ 
and $d y_i$ and finally recover $dP$ by performing a new interpolation.

\subsubsection*{Euclidean division.}

Let $A$ and $B$ be two polynomials with $B \neq 0$. The Euclidean division of 
$A$ by $B$ is the problem of finding $Q$ and $R$ with $A = BQ + R$ and $\deg R < \deg B$. 
Differentiating the above equality we find
\[
dA - dB \cdot Q = B \cdot dQ + dR,
\]
which implies that $dQ$ and $dR$ are respectively obtained as the 
quotient and the remainder of the Euclidean division of $dA - dB \cdot 
Q$ by $B$. This gives the differential. We note that the discussion 
above extends readily to convergent series (see also 
\cite{caruso-lubicz:14a}).

\subsubsection*{Greatest common divisors and B\'ezout coefficients.}

We fix two positive integers $n$ and $m$ with $n \geq m$. We consider the 
function $f : K_n[X] \times K_m[X] \to (K_{\leq n}[X])^3$ which sends a 
pair $(A,B)$ to the triple $(D, U, V)$ where $D$ is the \emph{monic} 
greatest common divisor of $A$ and $B$ and $U$ and $V$ are the B\'ezout 
coefficients of minimal degrees, computed by the extended 
Euclidean algorithm.
The nonvanishing of the resultant of $A$ and $B$ defines a Zariski open 
subset $\mathcal V_0$ where the function $\gcd$ takes the constant value 
$1$. On the contrary, outside $\mathcal V_0$, $\gcd(A,B)$ is a polynomial 
of positive degree.  Since $\mathcal V_0$ is dense, $f$ is not continuous outside 
$\mathcal V_0$. On the contrary, the function $f$ is differentiable, and even locally analytic,
on $\mathcal V_0$.

Of course, on $\mathcal V_0$, the first component $D$ is constant and 
therefore $dD = 0$. To compute $dU$ and $dV$, one can simply 
differentiate the B\'ezout relation $AU + BV = 1$. We get:
$$A \cdot dU + B \cdot dV = - (dA \cdot U + dB \cdot V)$$
from which we deduce that $dU$ (resp. $dV$) is obtained as the 
remainder in the Euclidean division of $U{\cdot}dX$ by $B$ (resp. of 
$V{\cdot}dX$ by $A$) where $dX = - (dA \cdot U + dB \cdot V)$.

In order to differentiate $f$ outside $\mathcal V_0$, we 
define the subset $\mathcal V_i$ of $K_n[X] \times 
K_m[X]$ as the locus where the $\gcd$ has degree $i$. The theory of 
subresultants shows that $\mathcal V_i$ is locally closed with respect to 
the Zariski topology. In particular, it defines a $K$-manifold in the 
sense of Appendix \ref{sec:manifold}, and the 
restriction of $f$ to $\mathcal V_i$ is differentiable. To compute
its differential, we proceed along the same lines as before: we 
differentiate the relation $AU + BV = D$ and obtain this way
$A \cdot dU + B \cdot dV - dD = dX$
with $dX = - (dA \cdot U + dB \cdot V)$. In the above relation, the
first two terms $A{\cdot}dU$ and $B{\cdot}dV$ are divisible by $D$ 
whereas the term $dD$ has degree less than $i$. 
Hence if $dX = D \cdot dQ + dR$ is the Euclidean division of $dX$ by $D$, 
we must have $\frac A D \cdot dU + \frac B D \cdot dV = dQ$ and $dD = 
-dR$. These relations, together with bounds on the degree of $U$ and $V$,
imply as before that $dU$ (resp. $dV$) are equal to the remainder in the
Euclidean division of $U{\cdot}dQ$ by $\frac B D$ (resp. of $V{\cdot}dQ$
by $\frac A D$).

\medskip

The lesson we may retain from this study is the following. If we have to 
compute the greatest common divisor of two polynomials $A$ and $B$ known 
with finite precision, we first need to determine what is its degree. 
However, the degree function is not continuous --- it is only upper 
semi-continuous --- and hence cannot be determined with certainty from $A$ and 
$B$, unless the approximation to $(A,B)$ lies entirely within $\mathcal V_0$.

We therefore need to make an extra hypothesis.  The most natural hypothesis to make
is that $\gcd(A, B)$ has the maximal possible degree.  The main reason for choosing
this convention is that if the actual polynomials $A, B \in K[X]$ have a greatest common
divisor of degree $i$ then there is some precision for which the maximal degree will be
equal to $i$, whereas if they have a positive
degree common divisor then no amount of increased precision will eliminate an intersection
with $\mathcal V_0$.  A second justification is that any other choice would yield a result
with no precision, since $A$ and $B$ appear to lie in $\mathcal V_i$ at the known precision.
Once this assumption is made, the computation is possible and one can apply Lemma 
\ref{lem:main} to determine the precision of the result.
Note that with the above convention, the $\gcd$ of $A$ and $A$ is $A$ 
itself although there exist pairs of coprime polynomials in any 
neighborhood of $(A,A)$.

\subsubsection*{Factorization.}

Suppose that we are given a polynomial $P_0 \in K_d[X]$ written as a 
product $P_0 = A_0 B_0$, where $A_0$ and $B_0$ are monic and coprime. 
Hensel's lemma implies that there exists a small neighborhood $\mathcal 
U$ of $P_0$ in $K_d[X]$ such that any $P \in \mathcal U$ factors 
uniquely as $P = A B$ with $A$ and $B$ monic and close enough to $A_0$ 
and $B_0$ respectively. Thus, we can consider the map $f : P 
\mapsto (A,B)$ defined on the subset of $\mathcal U$ consisting of monic 
polynomials. We want to differentiate $f$ at $P_0$. For this, we 
differentiate the equality $P = A B$ around $P_0$, obtaining
\begin{equation}
\label{eq:difffactor}
dP = A_0 \cdot dB + B_0 \cdot dA.
\end{equation}
where $dP$, $dA$ and $dB$ have degree less than $\deg P$, $\deg A$ and 
$\deg B$ respectively. If $A_0 U_0 + B_0 V_0 = 1$ is a Bezout relation
between $A_0$ and $B_0$, it follows from \eqref{eq:difffactor} that
$dA$ (resp. $dB$) is the remainder in the Euclidean division of $V_0
{\cdot} dP$ by $A_0$ (resp. of $U_0 {\cdot} dP$ by $B_0$).

\subsubsection*{Finding roots.}

An important special case of the previous study occurs when $A_0$ has 
degree $1$, that is $A_0(X) = X - \alpha_0$ with some $\alpha_0 \in K$. 
The map $P \mapsto A$ is then nothing but the mapping that 
follows the simple root $\alpha_0$. Of course, its differential around 
$P_0$ can be computed by the above method. Nevertheless, we may
obtain it more directly by expanding the equation $(P_0 + 
dP)(\alpha_0 + d\alpha) = 0$, obtaining
$P_0'(\alpha_0) d\alpha + dP(\alpha_0) = 0$.
Since $\alpha_0$ is a simple root, $P_0'(\alpha_0)$ 
does not vanish and we find:
$$d \alpha = - \frac{dP(\alpha_0)}{P'_0(\alpha_0)}.$$

We now address the case of a multiple root. Let $P_0$ be a monic 
polynomial of degree $d$ and $\alpha_0 \in K$ be a root of $P_0$ having 
multiplicity $m > 1$. Due to the multiplicity, it is no longer possible 
to follow the root $\alpha_0$ in a neighborhood of $P_0$. But it is if we 
restrict ourselves to polynomials which have a root of multiplicity $m$ 
close to $\alpha_0$. More precisely, let us consider the locus $\mathcal 
V_m$ consisting of monic polynomials of degree $d$ having a root of 
multiplicity at least $m$. It is a Zariski closed subset of $K_d[X]$ and
contains by assumption the polynomial $P_0$. 
Moreover, the irreducible components of $\mathcal V_m$ that meet at 
$P_0$ are in bijection with the set of roots of $P_0$ of multiplicity 
$\geq m$. In particular, $\alpha_0$ corresponds to one of these 
irreducible components; let us denote it by $\mathcal V_{m,\alpha_0}$. 
The algebraic variety $\mathcal V_{m,\alpha_0}$ is \emph{a fortioti} a
$K$-manifold. Moreover there exists a differentiable function $f$ which 
is defined on a neighborhood of $P_0$ in $\mathcal V_{m,\alpha_0}$ and 
follows the root $\alpha_0$, \emph{i.e.} $f(P_0) = \alpha_0$ and for all 
$P$ such that $f(P)$ is defined, $\alpha = f(P)$ is a root of 
multiplicity (at least) $m$ of $P$. The existence of $f$ follows from 
Hensel's Lemma applied to the factorisation $P_0(X) = (X-\alpha_0)^m 
B_0(X)$ where $B_0(X)$ is a polynomial which is coprime to $X - 
\alpha_0$. We can finally compute the differential of $f$ at $P_0$ 
(along $\mathcal V_{m,\alpha_0}$) by differentiating the equality
$P^{(m-1)}(\alpha) = 0$ where $P^{(i)}$ denotes the $i$-th derivative
of $P$. We find:
\begin{equation}
\label{eq:dalphamult}
d \alpha = - \frac{dP^{(m-1)}(\alpha_0)}{P_0^{(m)}(\alpha_0)}.
\end{equation}

The above computation has interesting consequences for $p$-adic 
precision. For example, consider the monic 
polynomial $P(X) = X^2 + O(p^{2N}) X + O(p^{2N})$ (where $N$ is some 
large integer) and suppose that we are asking a computer to compute one 
of its roots. What is the right answer? We remark that, if $\alpha$ is 
any $p$-adic number divisible by $p^N$, then $P(X)$ can be $X^2 - 
\alpha^2$ whose roots are $\pm \alpha$. Conversely, we can prove that 
the two roots of $P$ are necessarily divisible by $p^N$. Hence, the
right answer is certainly $O(p^N)$. Nevertheless, if we know in addition 
that $P$ has a double root, say $\alpha$, we can write $P(X) = (X - 
\alpha)^2$ and identifying the coefficient in $X$, we get $\alpha =
O(p^{2N})$ if $p > 2$ and $\alpha = O(p^{2N-1})$ if $p=2$.
This is exactly the result we get by applying Lemma \ref{lem:main} and
using \eqref{eq:dalphamult} to simplify the differential.

This phenomenon is general: if $P$ is a polynomial over $\Q_p$ known at 
some finite precision $O(p^N)$, a root $\alpha$ of $P$ having possibly 
multiplicity $m$ (\emph{i.e.} $P(\alpha), P'(\alpha), \ldots, P^{(m-1)} 
(\alpha)$ vanish at the given precision) can be computed at precision 
roughly $O(p^{N/m})$. However, if we know in advance that $\alpha$ has 
multiplicity $m$ then we can compute it at precision $O(p^{N-c})$ 
where $c$ is some constant (depending on $P$ and $\alpha$ but not on
$N$).

\subsection{Multivariate polynomials}

We consider the ring $K[\XX] = K[X_1, \ldots, X_n]$ of polynomials in $n$
variables over $K$ and fix a monomial order on it. 

\subsubsection*{Division.}

We then have a notion of division in $K[\XX]$: if $f, f_1, \ldots, f_s$ 
are polynomials in $K[\XX]$, then one may decompose $f$ as
$$f = q_1 f_1 + \cdots + q_s f_s + r$$
where $q_1, \ldots, q_s, r \in K[\XX]$ and no term of $r$ is divisible
by the leading term of some $f_i$ ($1 \leq i \leq s$). Moreover, assuming
that $(f_1, \ldots, f_s)$ is a Gr\"obner basis\footnote{A canonical choice of $r$ exists even without this
assumption. However, we do not know
if the computation of the differential that follows extends to this more
general setting.} of the ideal generated by these polynomials, the 
polynomial $r$ is uniquely determined and called the \emph{remainder} of 
the division of $f$ by the family $(f_1, \ldots, f_s)$. The map 
$(f, f_1, \ldots, f_s) \mapsto r$ is then well defined and we can compute its differential,
finding that $dr$ is obtained as the remainder of the division of $f - (q_1 
\cdot d f_1 + \cdots + q_s \cdot d f_s)$ by $(f_1, \ldots, f_s)$.

\subsubsection*{Gr\"obner basis.}

Recall that any 
ideal $I \subset K[\XX]$ admits a unique reduced Gr\"obner basis. We may
consider the map sending a family $(f_1, \ldots, f_s)$ of 
\emph{homogeneous}\footnote{This restriction will be convenient for us 
mainly because we are using the article \cite{vaccon:14b}, which deals only 
with homogeneous polynomials. It is however probably not essential.} 
polynomials of fixed degrees to the reduced Gr\"obner basis $(g_1, 
\ldots, g_t)$. It follows from \cite{vaccon:14b}*{Thm. 1.1} that this 
map is continuous at $(f_1, \ldots, f_s)$ provided that:
\begin{itemize}
\item the sequence $(f_1, \ldots, f_s)$ is regular
\item for all $i \in \{1, \ldots, s\}$, the ideal generated by
$f_1, \ldots, f_i$ is weakly-$w$ where $w$ denotes the fixed monomial
order (\emph{cf} \cite{vaccon:14b} for more detail).
\end{itemize}
A similar argument proves that it is actually differentiable at such
points. We are now going to compute the differential. For this we
remark that since the families $(f_1, \ldots, f_s)$ and $(g_1, \ldots,
g_t)$ generate the same ideal, we have a relation:
\begin{equation}
\label{eq:prodgrobner}
(g_1, \ldots, g_t) = (f_1, \ldots, f_s) \cdot A
\end{equation}
where $A$ is an $(s \times t)$ matrix with coefficients in $K[\XX]$.
Following Vaccon's construction, we see that the entries of $A$ 
can be chosen in such a way that they define differentiable functions.
Differentiating \eqref{eq:prodgrobner}, we get
$$(d g_1, \ldots, d g_t) =
(d f_1, \ldots, d f_s) \cdot A + (f_1, \ldots, f_s) \cdot dA.$$
Finally, since $(g_1, \ldots, g_t)$ is a \emph{reduced} Gr\"obner
basis, the above equality implies that $d g_i$ is the remainder in
the division of the $i$-th entry of the product
$(d f_1, \ldots, d f_s) \cdot A$ by the family $(g_1, \ldots, g_t)$.

\subsection{Matrices}
\label{ssec:matrices}

Differentiating ring operations (sum, products, inverse) over matrices 
is again straightforward, keeping in mind that matrix algebras are not
commutative.

\subsubsection*{Determinants and characteristic polynomials.}

We first outline the standard computation of the differential of the function
$\det : M_n(K) \to K$. Suppose that $M \in \GL_n(K)$ and that $\com(M) = \det(M) M^{-1}$.
Then
\begin{align*}
\det(M + dM) &= \det(M) \cdot \det(I + M^{-1} \cdot dM)  \\
&= \det(M) \cdot \big(1 + \tr(M^{-1} \cdot dM)\big) \\
&= \det(M) + \tr(\com(M) \cdot dM).
\end{align*}
The differential of $\det$ at $M$ is then $dM \mapsto \tr(\com(M) \cdot dM)$. It turns
out that this formula is still valid when $M$ is not invertible.
The same computation extends readily to characteristic polynomials,
since they are defined as determinants. More precisely, let us 
consider the function $\chi : M_n(K) \to K_n[X]$ taking a matrix 
$M$ to its monic characteristic polynomial $\det(X-M)$.
Then $\chi$ is differentiable at each point $M \in M_n(K)$ and its 
differential is given by $dM \mapsto \tr(\com(X{-}M) \cdot dM)$.

\subsubsection*{LU factorization.}

Define the LU factorization of a square matrix $M \in M_n(K)$ as a decomposition $M = LU$ 
where $L$ is lower triangular and unipotent and $U$ is upper triangular. 
Such a decomposition exists and is unique provided that no principal minor
of $M$ vanishes. We can then consider the mapping $M \mapsto (L,U)$ 
defined over the Zariski-open set of matrices satisfying the above 
condition. In order to differentiate it, we differentiate the relation 
$M = LU$ and rewrite the result as
$$L^{-1} dM \: U^{-1} = L^{-1} \cdot dL + dU \cdot U^{-1}.$$
We remark that in the right hand side of the above formula, the first
summand is lower triangular with zero diagonal whereas 
the second summand is upper triangular. Hence in order to compute $dL$
and $dU$, one can proceed as follows: 
\begin{enumerate}
\item compute the product $dX = L^{-1} dM \: U^{-1}$,
\item separate the lower and upper part of $dX$ obtaining $L^{-1} \cdot dL$ and $dU \cdot U^{-1}$
\item recover $dL$ and $dU$ by multiplying the above matrices by $L$ on the left and $U$ on the right respectively.
\end{enumerate}
The above discussion extends almost \emph{verbatim} to LUP 
factorization; the only difference is that LUP factorizations are not 
unique but they are on a small neighborhood of $M$ if we fix the matrix 
$P$.

\subsubsection*{QR factorization.}

A QR factorization of a square matrix $M \in M_n(K)$ will be
a decomposition $M = QR$ where $R$ is unipotent upper triangular and $Q$ is
orthogonal in the sense that $\trans Q \cdot Q$ is diagonal. As before, such 
a decomposition exists and is unique on a Zariski-open subset of $M_n(K)$. The
mapping $f : M \mapsto (Q,R)$ is then well defined on this subset. We 
would like to emphasize at this point that the orthogonality condition
defines a sub-manifold of $M_n(K)$ which is \emph{not} a
vector space: it is defined by equations of degree $2$. The codomain
of $f$ is then also a manifold; this example then fits into the setting
of Appendix \ref{sec:manifold} but not to those of Section \ref{sec:mainlemma}. 
We can differentiate $f$ by following the method used for LU 
factorization.  Differentiating the relation $M = QR$, we obtain
\begin{equation}
\label{eq:diffQR}
\trans Q \cdot dM \cdot R^{-1} = \trans Q \cdot dQ + \Delta \cdot dR 
\cdot R^{-1}
\end{equation}
where $\Delta = \trans Q \cdot Q$ is a diagonal matrix by definition.
Moreover by differentiating $\trans Q \cdot Q = \Delta$, we find that
$\trans Q \cdot dQ$ can be written as the sum of an antisymmetric 
matrix and a diagonal one. Since moreover $dR \cdot R^{-1}$ is upper
triangular with all diagonal entries equal to $0$, we see that 
\eqref{eq:diffQR} is enough to compute $dQ$ and $dR$ from $Q$, $R$ 
and $dM$.

\subsection{Vector spaces}

In computer science, vector spaces are generally represented as subspaces 
of $K^n$ for some $n$. Hence they naturally appear as points on some 
grassmannian $\Grass(n,d)$. These grassmannians are $K$-manifolds in the 
sense of Appendix \ref{sec:manifold} as it is discussed in \S 
\ref{ssec:exmanifold}. In the sequel, we use freely the formalism 
introduced there.

\subsubsection*{Left kernels.}

Let $d$ and $n$ be two nonnegative integers such that $d \leq n$.
We consider the open subset $\mathcal V_{n-d}$ of $M_{n,n-d}(K)$ 
consisting of matrices of full rank, \emph{i.e.} rank $n-d$. The left 
kernel defines a mapping $\text{LK} : M_{n,n-d}(K) \to \Grass(d,n)$. Let 
us prove that it is differentiable and compute its differential around 
some point $M \in M_{n,n-d}(K)$. Of course, there exists a neighborhood 
$\mathcal U$ of $M$ whose image is entirely contained in a given chart 
$U_P$. We assume for simplicity that $P$ is the identity. On $\mathcal 
U$, the map $\text{LK}$ corresponds in our chart to the map that 
takes $M$ to the unique matrix $N \in M_{d, n-d}(K)$ such that 
$\begin{pmatrix} I_d & N \end{pmatrix} \cdot M = 0$. The implicit 
function theorem then implies that $\text{LK}$ is differentiable. 
Furthermore, its differential satisfies the relation
$\begin{pmatrix} 0 & dN \end{pmatrix} \cdot M +
\begin{pmatrix} I_d & N \end{pmatrix} \cdot dM = 0$,
from which we can compute $dN$ by projecting on the $(n-d)$ last
columns.

Following what we have already done in \S \ref{ssec:polynomials} for 
greatest common divisors of polynomials, we can develop further this 
example and study what happens on the closed subset of $M_{n,n-d}(K)$ 
where matrices have not full rank. On this subspace, the left kernel has 
dimension $< d$ and then no longer defines a point in $\Grass(n,d)$. 
Nevertheless, for all integer $r < n-d$, we can consider the subset 
$\mathcal V_r \subset M_{n,n-d}(K)$ of matrices whose rank are exactly 
rank $r$. It is locally closed in $M_{n,n-d}(K)$ with respect to the 
Zariski topology and hence defines a $K$-manifold. Furthermore, we have a 
mapping $\text{LK}_r : \mathcal M_r \to \Grass(n,n-r)$ which is 
differentiable and whose differential can be computed as before.

\subsubsection*{Intersections.}

We pick $n$, $d_1$ and 
$d_2$ three nonnegative integers such that $d_1 \leq n$, $d_2 \leq n$ 
and $d_1 + d_2 \geq n$. Two subspaces of $K^n$ of dimension $d_1$ and 
$d_2$ respectively meet along a subspace of dimension at most $d_1 + d_2 
- n$. For all $d \leq d_1 + d_2 -n$, we can then define the subspace 
$\mathcal V_d$ of $\Grass(n,d_1) \times \Grass(n,d_2)$ consisting of 
pairs $(E_1, E_2)$ such that $\dim (E_1 \cap E_2) = d$ and consider the 
function $f_d : \mathcal V_d \to \Grass(n, d)$ which sends $(E_1, E_2)$ 
to $E_1 \cap E_2$. In charts, the function $f_d$ can be interpreted as
the kernel of a matrix simply because $E_1 \cap E_2$ appears as the
kernel of the canonical linear map $E_1 \oplus E_2 \to K^n$. We thus deduce
that $f_d$ is differentiable and that its differential can be
computed as above.

\subsubsection*{Sums and images.}

Finally, we note that similar results hold for images of matrices and,
consequently, sums of subspaces.

\bibliographystyle{plain}
\bibliography{roebib/Biblio,extras}

\end{document}